\newtheorem{theorem}{Theorem}
\newtheorem{proposition}{Proposition}[section]
\newcommand*{\CC}{\mathbb{C}}
\begin{document}

\title[Short proof that the Euclidean algorithm is Gaussian]{A short proof that the number of division steps in the Euclidean algorithm is normally distributed}
\author{Ian D. Morris}

\begin{abstract}D. Hensley showed in 1994 that the number of steps taken by the Euclidean algorithm to find the greatest common divisor of two natural numbers less than or equal to $n$ follows a normal distribution in the limit as $n$ tends to infinity. V. Baladi and B. Vall\'ee subsequently gave an alternative proof for both the classical Euclidean algorithm and several of its close variants, based on a detailed investigation of spectral properties of the transfer operator associated to the Gauss map, building on deep results of D. Dolgopyat. In this article we give a much shorter, albeit less quantitative, proof of this result using only basic spectral properties of the transfer operator together with the method of moments and a Tauberian theorem due to H. Delange.

MSC codes: Primary 11A05, 11K50, 68W40; Secondary 37D20, 37D35.
\end{abstract}
\maketitle

\section{Introduction}\label{Introd}

The Euclidean algorithm for determining the greatest common divisor of two natural numbers is believed to be the oldest nontrivial algorithm which remains in common use to the present day \cite[p.335]{Knuth}. By the late $16^{\mathrm{th}}$ century it had been noticed that the slowest case of the algorithm is that in which the input is a pair of successive Fibonacci numbers \cite{Sch}. Rigorous analysis of the number of division steps required by the algorithm began in earnest in the $19^{\mathrm{th}}$ century when the first logarithmic upper bound was given by P.-J.-\'E. Finck (\cite{Finck}, for a detailed historical overview see \cite{Shallit}). Analysis of the number of division steps \emph{typically} required to execute the algorithm did not develop until the second half of the $20^{\mathrm{th}}$ century, when H. Heilbronn \cite{Heilbronn} and J. D. Dixon \cite{Dixon} independently showed that the average number of divisions required to process a pair of integers both bounded by $n$ is asymptotic to $\frac{12}{\pi^2}\log 2\log n$ in the limit as $n \to \infty$. A further milestone was achieved by D. Hensley \cite{Hensley} who in 1994 showed that the deviation from this mean is asymptotically Gaussian for large $n$, giving also a local limit theorem and an upper bound for the rate of convergence to the limit distribution.

Analyses of variant versions of the Euclidean algorithm were undertaken in the late  $20^{\mathrm{th}}$ century using somewhat disparate methods  \cite{Brent,Rieger,YK}. B. Vall\'ee \cite{Val} subsequently unified the analysis of the average number of division steps of a range of Euclidean-type algorithms into a single technique based on the thermodynamic formalism of the Gauss-Kuzman-Wirsing operator (also called the \emph{density transformer} or \emph{transfer operator}) and its close relatives. This approach was extended in joint work with V. Baladi (\cite{BV}, see also \cite{BVcorr}) where Hensley's result was generalised to prove that the number of division steps asymptotically follows a Gaussian distribution throughout a class of variant Euclidean algorithms, with detailed estimates on the rate of convergence. Baladi and Vall\'ee's results also generalised Hensley's result to different cost measures of the algorithm including the binary length of the divisor and the number of occurences of particular quotients.

Both the proof given by Hensley \cite{Hensley} and that given by Baladi-Vall\'ee \cite{BV} are long and technically involved. Baladi and Vall\'ee's treatment in particular rests on a delicate investigation of the norm of certain complex perturbations of the transfer operator using techniques developed by D. Dolgopyat \cite{Dolg} in the application of thermodynamic formalism to geodesic flows. This may lead the reader to wonder whether the fact that the number of steps obeys a Gaussian distribution is amenable to a shorter proof. In this article we will show that the existence of a Gaussian limit distribution for the number of steps (but without information on the rate of convergence) can be recovered via a relatively simple strategy using only qualitative features of the spectrum of the transfer operator: indeed, the properties which we require do not significantly extend beyond those used in Vall\'ee's earlier article \cite{Val}. We also hope that the approach presented here may facilitate the generalisation of this type of result to related contexts such as \cite{LTDH,Mo14,Val08} where the corresponding Dolgopyat-type estimate is unavailable or may even be impossible. 

\section{Statement of results}\label{Statem}

The standard Euclidean algorithm, starting from a pair of integers $(u,v)$ with $0<u \leq v$, iterates the following step, which we call the division step: map the pair $(u,v)$ to the pair $(r,u)$, where $v=qu+r$ and $0 \leq r <v$. This procedure is repeated until a pair of the form $(0,k)$ is obtained, whereupon the integer $k$ is returned as the GCD of $u$ and $v$. The algorithm may be modelled via the following parallel procedure: given a rational number $x:=u/v \in (0,1]$, we map that number to the new rational number $T(x):=\frac{1}{x}-\lfloor \frac{1}{x}\rfloor$, repeating until zero is obtained. Since at each stage the integer $r=v-qu$ equals precisely $v-u\lfloor v/u\rfloor$, it is clear that both procedures involves the same number of steps when applied to each pair $(u,v)$. To measure the number of steps taken to reduce the pair $(u,v)$ is thus equivalent to finding the least integer $N(u/v) \geq 1$ such that $T^{N(u/v)}(u/v)=0$. As in the work of Vall\'ee and Baladi, in addition to studying the integer $N(u/v)$ we are easily able to adapt our arguments to the study of more general ``cost'' measurements, which assign to the pair $(u,v)$ a cost $C(u/v)$ depending on the particular sequence of divisions performed -- or, equivalently, on the digits $a_i \geq 1$ arising in the finite continued fraction representation $u/v=[a_1,\ldots,a_n]$. In principle our method is capable of studying the case in which the cost $c(n)$ associated to division by the integer $n$ increases logarithmically with $n$, but in this note we have restricted our attention to bounded cost functions $c \colon \mathbb{N} \to [0,+\infty)$ so as to simplify the exposition.

Let $\xi \colon [0,1]\to \mathbb{R}$ denote the function $\xi(x):=\frac{1}{(\log 2)(1+x)}$ which is the density of the unique absolutely continuous $T$-invariant probability measure on $[0,1]$. (In subsequent sections we shall also write $\xi$ for the extension of that function to a certain disc in the complex plane.) For each $n \geq 0$ let us define $\tilde{\Omega}_n$ to be the set of all pairs of integers $(u,v)\in\mathbb{N}^2$ such that $1 \leq u < v \leq n$, and let $\Omega_n$ denote the set of all $(u,v) \in\Omega_n$ such that $\mathrm{gcd}(u,v)=1$. Let $\mathbb{P}_n$ (resp. $\tilde{\mathbb{P}}_n$) denote the uniform probability measure on $\Omega_n$ (resp. $\tilde{\Omega}_n$). We prove the following result:
\begin{theorem}\label{qfwfq}
Let $c \colon \mathbb{N} \to [0,+\infty)$ be a bounded function which is not identically zero, and for each $u/v=[a_1,\ldots,a_n] \in (0,1) \cap \mathbb{Q}$ with $a_n \geq 2$ define $C(u/v):=\sum_{k=1}^n c(a_k)$. Then there exist constants $\mu,\sigma^2>0$ such that for all $t \in \mathbb{R}$
\[\lim_{n \to \infty} \mathbb{P}_n\left(\left\{(u,v) \in \Omega_n \colon \frac{ C(u/v)-\mu\log n}{\sigma\sqrt{\log n}} \leq t\right\}\right) = \frac{1}{\sqrt{2\pi}}\int_{-\infty}^t e^{-\frac{x^2}{2}}dx,\]
and similarly for $\tilde{\mathbb{P}}_n$ and $\tilde{\Omega}_n$ with the same constants. The mean $\mu$ satisfies
\begin{equation}\label{mean}\mu:=\frac{2}{\mathfrak{h}(T)}\sum_{n=1}^\infty c(n)\int_{\frac{1}{n+1}}^{\frac{1}{n}}\xi(x)dx\end{equation}
where $\mathfrak{h}(T):=\int_0^1\log|T'(x)|\xi(x)dx=\frac{\pi^2}{6\log2}$. If we define $\psi(x):=c(n)+\mu\log x$ for all $x \in (\frac{1}{n+1},\frac{1}{n}]$ and $\psi(0):=0$, then the variance $\sigma^2$ satisfies
\[\sigma^2=\frac{2}{\mathfrak{h}(T)}\left(\lim_{n \to \infty} \frac{1}{n}\int_0^1\left(\sum_{k=0}^{n-1} \psi(T^kx)\right)^2\xi(x)dx\right)>0.\]
\end{theorem}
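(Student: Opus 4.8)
The plan is to establish convergence of moments. For each fixed integer $k\ge 1$ I would show that $\mathbb{E}_n\bigl[(C(u/v)-\mu\log v)^k\bigr]$ is asymptotic to $(k-1)!!\,\sigma^k(\log n)^{k/2}$ for even $k$ and is $o\bigl((\log n)^{k/2}\bigr)$ for odd $k$, and likewise under $\tilde{\mathbb{P}}_n$ with the same constants, and then quote the method of moments. Passing from the centring $\mu\log v$ to $\mu\log n$ and dividing by $\sigma\sqrt{\log n}$ is justified by Slutsky's lemma, since under $\mathbb{P}_n$ (and $\tilde{\mathbb{P}}_n$) the law of $v/n$ converges to the density $2t\,\mathbf{1}_{(0,1)}$, so that $\log v-\log n=O_{\mathbb{P}}(1)=o_{\mathbb{P}}(\sqrt{\log n})$. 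Working with the \emph{centred} cost is dictated by the elementary identity $\prod_{j=0}^{N(u/v)-1}T^j(u/v)=1/v$ (a consequence of $\prod_{j=0}^{N-1}|T'(T^jx)|=v^2$, itself the chain rule applied to the continued-fraction inverse branches), which gives
\[
C(u/v)-\mu\log v=\sum_{j=0}^{N(u/v)-1}\psi\bigl(T^j(u/v)\bigr),
\]
a Birkhoff sum of $\psi$; and formula \eqref{mean} is exactly what makes $\int_0^1\psi\,\xi\,dx=\sum_n c(n)\int_{1/(n+1)}^{1/n}\xi\,dx-\tfrac{\mu}{2}\int_0^1\log|T'|\,\xi\,dx=\tfrac{\mu}{2}\mathfrak{h}(T)-\tfrac{\mu}{2}\mathfrak{h}(T)=0$, so that a Gaussian limit with variance of order $\log n$ is exactly what is expected.

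To compute the moments I would pass to Dirichlet series and the transfer operator. Put $F_w(s):=\sum e^{wC(u/v)}v^{-2s}$, the sum over coprime pairs $0<u<v$; since $v^{-2s}=|(h_{a_1}\circ\cdots\circ h_{a_n})'(0)|^s$ for $u/v=[a_1,\dots,a_n]$ with $h_a(x)=1/(a+x)$, a routine computation expresses $F_w(s)$ through the resolvent of the weighted transfer operator $\mathcal{L}_{s,w}g(x)=\sum_{n\ge1}e^{wc(n)}(n+x)^{-2s}g\bigl(\tfrac1{n+x}\bigr)$ (the terminal constraint $a_n\ge2$ being absorbed in the usual way), and the generating function for $\tilde{\mathbb{P}}_n$ is $\zeta(2s)F_w(s)$. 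I would use only the basic spectral features of $\mathcal{L}_{s,w}$ already present in Vall\'ee's average-case analysis \cite{Val}: for $(s,w)$ near $(1,0)$ there is a simple isolated dominant eigenvalue $\lambda(s,w)$, analytic in $(s,w)$, with $\lambda(1,0)=1$, $\partial_s\lambda(1,0)=-\mathfrak{h}(T)$ and $\partial_w\lambda(1,0)=\sum_n c(n)\int_{1/(n+1)}^{1/n}\xi\,dx=:\beta$, together with a spectral gap; and the ``non-lattice'' property of the Gauss map, which classically guarantees that $F_0(s)$ — and each of the auxiliary series below — continues holomorphically to a neighbourhood of the line $\Re s=1$ with the only pole at $s=1$, with moderate growth on vertical lines. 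Writing $(I-\mathcal{L}_{s,w})^{-1}=\frac{P_{s,w}}{1-\lambda(s,w)}+R(s,w)$ with $R$ holomorphic and $P_{s,w}$ the rank-one dominant projection, one obtains $F_w(s)=\frac{A(s,w)}{1-\lambda(s,w)}+B(s,w)$ with $A,B$ holomorphic near $(1,0)$ and, by a short computation, $A(1,0)=\tfrac1{2\log2}=\tfrac{3\mathfrak{h}(T)}{\pi^2}>0$.

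The crucial manoeuvre is that
\[
\sum(C(u/v)-\mu\log v)^k\,v^{-2s}=\partial_w^k\bigl[F_w\bigl(s+\tfrac{\mu}{2}w\bigr)\bigr]\Big|_{w=0},
\]
since $s\mapsto s+\tfrac{\mu}{2}w$ inserts the factor $v^{-\mu w}=e^{-\mu w\log v}$. The denominator $1-\lambda(s+\tfrac{\mu}{2}w,w)$ then has vanishing $w$-derivative at $(1,0)$, namely $-\tfrac{\mu}{2}\partial_s\lambda(1,0)-\partial_w\lambda(1,0)=\tfrac{\mu}{2}\mathfrak{h}(T)-\beta=0$ by \eqref{mean}, so its zero set is $s=\sigma(w)$ with $\sigma(0)=1$, $\sigma'(0)=0$, and $2\sigma''(0)$ equal, by the standard thermodynamic-formalism identity relating the second derivative of the pressure to the Green--Kubo variance divided by the entropy, to the constant $\sigma^2$ of the statement. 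Expanding $\frac1{s-\sigma(w)}=\sum_{\ell\ge0}(\sigma(w)-1)^\ell(s-1)^{-\ell-1}$ and using $\sigma(w)-1=O(w^2)$, only the terms with $2\ell\le k$ survive $\partial_w^k|_{w=0}$; hence the series has at $s=1$ a pole of order at most $\lfloor k/2\rfloor+1$, and for even $k$ its leading Laurent coefficient is $k!\,\dfrac{A(1,0)}{\mathfrak{h}(T)}\Bigl(\dfrac{\sigma^2}{4}\Bigr)^{k/2}$. This is the algebraic cancellation driving the whole argument — without the recentring the pole would naively have order $k+1$ — and I expect the careful tracking of this cancellation, together with the identification of the leading coefficient with a power of $\sigma^2$, to be the main bookkeeping obstacle.

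It remains to Tauberianise. Because the coefficients of the above series change sign for odd $k$, I would apply Delange's Tauberian theorem not to it but, for each $0\le j\le k$, to the \emph{positive}-coefficient series $\sum C(u/v)^j(\log v)^{k-j}v^{-2s}=(-\tfrac12\partial_s)^{k-j}\bigl[\sum C(u/v)^j v^{-2s}\bigr]$, which inherits the required continuation and growth; combining the $k+1$ resulting asymptotics through $(C-\mu\log v)^k=\sum_j\binom{k}{j}(-\mu)^{k-j}C^j(\log v)^{k-j}$ and matching against the pole order found above forces the combination to equal $n^2P_k(\log n)+o(n^2)$ with $\deg P_k=\lfloor k/2\rfloor$ and leading coefficient read off from the Laurent data. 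Dividing by $|\Omega_n|\sim\tfrac3{\pi^2}n^2$ (respectively $|\tilde\Omega_n|\sim\tfrac12 n^2$ with the extra factor $\zeta(2)$), and using $A(1,0)=\tfrac{3\mathfrak{h}(T)}{\pi^2}$, one gets precisely $(k-1)!!\,\sigma^k(\log n)^{k/2}$ for even $k$ and $O\bigl((\log n)^{(k-1)/2}\bigr)$ for odd $k$, so the normalised moments converge to those of the standard Gaussian and the method of moments finishes the proof. Positivity $\sigma^2>0$ follows because $\psi$ is not an $L^2(\xi\,dx)$-coboundary: evaluating at the $T$-fixed points $[n,n,n,\dots]\to0$ gives $\psi([n,n,\dots])=c(n)+\mu\log[n,n,\dots]\to-\infty$, so the periodic-orbit obstruction does not vanish. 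The only ingredient not proved from scratch is the quasi-compactness and continuation package for the Gauss--Kuzmin--Wirsing operator along $\Re s=1$, which is classical for the standard algorithm and does not go beyond what is used in \cite{Val}.
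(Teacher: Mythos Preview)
Your overall architecture is essentially the paper's: method of moments, transfer operator spectral analysis with the centring built in via the substitution $s\mapsto s+\tfrac{\mu}{2}w$, and Delange's Tauberian theorem. Your pole-order computation via expanding $1/(s-\sigma(w))$ is equivalent to the paper's Fa\`a di Bruno argument, and your use of Slutsky to pass from $\mu\log v$ to $\mu\log n$ is a legitimate alternative to the paper's direct moment estimates.

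There is, however, a genuine gap in your treatment of the odd moments. You propose to apply Delange separately to each positive-coefficient series $\sum C(u/v)^j(\log v)^{k-j}v^{-2s}$ and then recombine via the binomial expansion. But each of these individual series has a pole at $s=1$ of order $k+1$ (since $C(u/v)$ is typically of size $\log v$), so Delange yields only $c_j\, n^2(\log n)^k + o(n^2(\log n)^k)$ for its partial sums. When you form the signed linear combination the leading coefficients $c_j$ do cancel---consistently with the combined series having a pole of order only $\lfloor k/2\rfloor+1$---but the individual error terms, each of size $o(n^2(\log n)^k)$, do not combine to anything better than $o(n^2(\log n)^k)$. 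This is far too weak to conclude that the $k$th moment is $O(n^2(\log n)^{(k-1)/2})$. Knowing the reduced pole order of the combined series does not rescue the argument, because you cannot apply Delange to it (its coefficients are signed), and upgrading to a Tauberian theorem for signed coefficients typically requires analytic continuation past the line $\Re s=1$ with vertical growth control---precisely the Dolgopyat-type input the paper is designed to avoid.

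The paper circumvents this with a device of Hwang and Janson: for odd $p$ it builds two auxiliary \emph{positive}-coefficient series $\mathcal{D}_1$ and $\mathcal{D}_2$ (involving $(\log v)^{p/2}$ rather than integer powers) whose difference $2\mathcal{D}_3$ is expressed as a fractional integral $\int_0^\infty D_p^{((p+1)/2)}(s+t)\,t^{-1/2}\,dt$. The singularity of $\mathcal{D}_3$ at $s=1$ then has order at most $p+\tfrac12$, strictly below $p+1$, so $\mathcal{D}_1$ and $\mathcal{D}_2$ share the same leading pole and Delange applied to each gives matching leading asymptotics; their difference, which encodes the odd moment weighted by $(\log v)^{p/2}$, is therefore $o(n^2(\log n)^p)$, and a further summation-by-parts argument removes the weight. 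This is the one substantive idea your sketch is missing.
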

As an example of the application of Theorem \ref{qfwfq}, if $c$ is set to be the constant function $1$ then the cost $C(u/v)$ corresponds to the number of division steps required to evaluate $\mathrm{gcd}(u,v)$. Alternatively, if we define $c(m):=1$ and $c(n):=0$ otherwise, then the resulting cost $C(u/v)$ is the number of occurences of the digit $m$ in the continued fraction representation of the number $u/v$.

By a classical theorem of V. A. Rokhlin \cite{Roh} the quantity $\mathfrak{h}(T)$ is precisely the entropy of the transformation $T$ with respect to the absolutely continuous invariant probability measure on $[0,1]$ with density $\xi$. However, this identification has no bearing on our argument beyond its influence on our choice of notation. While our exposition in this article restricts itself to the classical Euclidean algorithm, it is easy to modify our method along the lines of \cite{BV,Val} so as to apply to its Odd and Centred variants.

The proof of Theorem \ref{qfwfq} which we present uses the \emph{method of moments}. By standard results of probability theory, to prove that the distribution of $C(u/v)$ converges to a normal distribution it is sufficient to prove that for every integer $p \geq 0$ the sequence of centred $p^{\mathrm{th}}$ moments,
\begin{equation}\label{shirley}\frac{1}{\#\Omega_n}\sum_{(u,v) \in \Omega_n} \left(\frac{C(u/v)-\mu\log n}{\sigma\sqrt{\log n}}\right)^p,\end{equation}
converges to the $p^{\mathrm{th}}$ moment of a standard normal distribution (see e.g. \cite[\S30]{Bill}). To study these moments we consider the bivariate Dirichlet series
\[D(s,\omega):=\sum_{n =1}^\infty \frac{1}{n^{2s}} \sum_{(u,v) \in \Omega_n \setminus \Omega_{n-1}} \exp\left(\omega\left(C(u/v)-\mu\log v\right)\right).\]
The sum which appears in the $p^{\mathrm{th}}$ moment \eqref{shirley} resembles the sum of the first $n$ terms of the $p^{\mathrm{th}}$ partial derivative with respect to $\omega$ of $D(s,\omega)$ evaluated at $s=\omega=0$, differing in the presence of $\mu\log n$ versus $\mu\log v$ inside the summation. To exploit this fact we extract this sum from $D(s,\omega)$ using a Tauberian theorem due to H. Delange, stated below as Theorem \ref{you-go-tell-dr-dre-that-man-will-fuck-you-up}. When $p$ is even Delange's result may be applied directly to the calculation of the moment. For odd $p$ our analysis is complicated by the fact that the summands fail to be non-negative, and to overcome this issue we adopt a strategy due to H.-K. Hwang and S. Janson \cite{HJ,HJcorr}. These parts of the proof constitute \S\ref{Thetau} below.

To show that the Dirichlet series has the properties needed to apply Delange's theorem we equate the series with a summation over all of the possible compositions of inverse branches of $T$.  This is achieved by expressing these sums over inverse branches in terms of the Gauss-Kuzmin-Wirsing operator, the necessary features of which are studied in \S\ref{Thetra} below. In \S\ref{Thedir} we effect the translation of the properties of the transfer operator into those of the Dirichlet series.

In preparing this article we have taken some effort to minimise the prior knowledge of ergodic theory and thermodynamic formalism required  on the part of the reader. In particular, the only significant ``black box'' results to which we appeal are Delange's Tauberian theorem, the efficacy of the method of moments, and certain results from the perturbation theory of compact operators, for the last of which we refer the reader to the book by T. Kato \cite{Kato}. The decision to restrict our attention to bounded costs is partly based in this effort at conciseness: to treat the general case would require either appeal to an external result in thermodynamic formalism (as occurs in \cite{BV}) or an exposition which would have substantially lengthened \S\ref{Thetra}.

\section{Part I: the transfer operator}\label{Thetra}

In this section we define a weighted version of the Gauss-Kuzmin-Wirsing operator and establish those spectral properties of that operator which will be used in the subsequent sections. This material is to a significant extent expository and is included for the sake of completeness.

Here and throughout the article we let $\mathbb{D}:=\{z \in \mathbb{C} \colon |z-\frac{2}{3}|<1\}$, and let $\xi\colon\mathbb{D}\to\mathbb{C}$ be the holomorphic extension of the function $\xi$ defined in the introduction.
Since the function $z \mapsto z+n$ has no zeros in $\mathbb{D}$ we may unambiguously write $(z+n)^s =\exp(s\log(z+n))$ for every $z \in \mathbb{D}$ and $s \in \mathbb{C}$, where $\log (z+n)$ indicates the branch of the complex logarithm which coincides with the real logarithm when $z \in [0,1]$. We recall that a function from an open subset of $\mathbb{C}$ to a Banach space $\mathfrak{X}$ is called holomorphic if it is locally equal to a convergent power series with coefficients  in $\mathfrak{X}$. A function from an open subset of $\mathbb{C}$ to $\mathfrak{X}$ is holomorphic if and only if its composition with each element of $\mathfrak{X}^*$ is holomorphic in the ordinary sense. Throughout this article we shall repeatedly appeal without comment to the fact that a uniform limit of holomorphic functions is holomorphic, which follows from the combination of Cauchy and Morera's theorems. We recall that $H^\infty(\mathbb{D})$ denotes the set of all bounded holomorphic functions on $\mathbb{D}$, which by the aforementioned principle is a complex Banach space when equipped with the uniform norm.

Here and throughout the article we let $|c|_\infty$ denote the least upper bound of the cost function $c$, and let $\mu >0$ be as defined by \eqref{mean}. For each $s,\omega \in \mathbb{C}$ with $\Re(s)>\frac{1}{2}$, and each $f \in H^\infty(\mathbb{D})$, let us define a pair of functions $\mathcal{L}_{s,\omega}f, \mathcal{F}_{s,\omega}f \colon \mathbb{D} \to \mathbb{C}$ by
\begin{equation}\label{TO}\left(\mathcal{L}_{s,\omega}f\right)(z) = \sum_{n=1}^\infty \frac{e^{\omega c(n)}}{(z+n)^{2s+\mu\omega}}f\left(\frac{1}{z+n}\right),\end{equation}
\[\left(\mathcal{F}_{s,\omega}f\right)(z) = \sum_{n=2}^\infty \frac{e^{\omega c(n)}}{(z+n)^{2s+\mu\omega}}f\left(\frac{1}{z+n}\right).\]
(The difference between these two functions -- that the sum defining $\mathcal{F}_{s,\omega}$ does not include the index $n=1$ -- reflects the fact that the continued fraction expansion of a rational number is defined unambiguously only if the final digit is not permitted to equal $1$.)  We begin the proof of Theorem \ref{qfwfq} by quickly proving the following result which is of a somewhat classic type; compare for example \cite[Prop.1]{Mayer}, \cite[Thm. 1]{Val}, \cite[Prop. 0]{BV}.
\begin{proposition}\label{TheOne}
The equation \eqref{TO} defines a family of linear operators $\mathcal{L}_{s,\omega}$, $\mathcal{F}_{s,\omega}$ on $H^\infty(\mathbb{D})$ such that:
\begin{enumerate}[(i)]
\item
For every $(s,\omega) \in \CC$ with $\Re(s)>\frac{1}{2}$ and $|\omega|$ sufficiently small, $\mathcal{L}_{s,\omega}$ and $\mathcal{F}_{s,\omega}$ are well-defined compact operators on $H^\infty(\mathbb{D})$. The dependence of these operators on $(s,\omega)$ is holomorphic.
\item
The operator $\mathcal{L}_{1,0}$ acts boundedly on $L^1([0,1])$ and satisfies $\int_0^1 (\mathcal{L}_{1,0}f)(x)dx=\int_0^1 f(x)dx$ for all $f \in L^1([0,1])$. If $f, g \colon [0,1] \to \mathbb{C}$ are such that $f$ and $f(g\circ T)$ both belong to $L^1([0,1])$, then $\mathcal{L}_{1,0}(f(g \circ T)) = (\mathcal{L}_{1,0}f)g$ Lebesgue almost everywhere on $[0,1]$. If $f \in L^1([0,1])$ then also $f \circ T \in L^1([0,1])$ and $|f\circ T|_{L^1} \leq C|f|_{L^1}$ where $C>1$ is constant.
\item
There exists an open set $\mathcal{V} \subset \CC^2$ containing $(1,0)$ such that for all $(s,\omega) \in \mathcal{V}$ we may write $\mathcal{L}_{s,\omega} = \lambda(s,\omega)\mathcal{P}_{s,\omega} + \mathcal{N}_{s,\omega}$ where $\mathcal{P}_{s,\omega}\mathcal{N}_{s,\omega}=\mathcal{N}_{s,\omega}\mathcal{P}_{s,\omega}=0$, $\mathcal{P}_{s,\omega}$ is a projection onto a one-dimensional subspace of $H^\infty(\mathbb{D})$ which is an eigenspace of $\mathcal{L}_{s,\omega}$, $\mathcal{N}_{s,\omega}$ has spectral radius strictly less than $1$, and $\lambda$ is a complex number. The dependence of $\lambda, \mathcal{P}$ and $\mathcal{N}$ on $(s,\omega)$ is holomorphic throughout $\mathcal{V}$. We have $\lambda(1,0)=1$ and $\mathcal{P}_{1,0}f = \left(\int_0^1 f(x)dx\right) \xi$ for all $f \in H^\infty(\mathbb{D})$.
j\end{enumerate}
\end{proposition}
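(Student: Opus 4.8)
The plan is to establish (i) by estimating the defining series (\ref{TO}) directly, and then to bootstrap (ii) and (iii) from it, the only genuinely substantial point being the spectral gap for $\mathcal{L}_{1,0}$ needed in (iii).

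\emph{Part (i).} The geometric input is that every inverse branch $\psi_n(z):=1/(z+n)$ satisfies $\overline{\psi_n(\mathbb{D})}\subset\mathbb{D}$, with the union $\bigcup_{n\geq1}\overline{\psi_n(\mathbb{D})}$ relatively compact in $\mathbb{D}$: since $\mathbb{D}\subset\{\Re z>-\tfrac13\}$ one has $|z+n|\geq n-\tfrac13$ on $\mathbb{D}$, so $\psi_n(\mathbb{D})$ is a disc of radius $O(n^{-2})$ shrinking to $0\in\mathbb{D}$ for large $n$, while for the finitely many small $n$ a one-line Möbius computation confirms $\overline{\psi_n(\mathbb{D})}\subset\mathbb{D}$ (for instance $\overline{\psi_1(\mathbb{D})}$ is the closed disc of radius $\tfrac9{16}$ about $\tfrac{15}{16}$). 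Consequently, for $\Re(s)>\tfrac12$ and $|\omega|$ small enough that $2\Re(s)+\Re(\mu\omega)>1$, the bound $\bigl|e^{\omega c(n)}(z+n)^{-2s-\mu\omega}f(\psi_n(z))\bigr|\leq e^{|\omega||c|_\infty}(n-\tfrac13)^{-2\Re(s)-\Re(\mu\omega)}\|f\|_{\infty}$ shows that (\ref{TO}) converges uniformly on $\mathbb{D}$, and locally uniformly in $(s,\omega)$; this makes $\mathcal{L}_{s,\omega}$ a bounded operator on $H^\infty(\mathbb{D})$ whose dependence on $(s,\omega)$ is holomorphic. Compactness follows because each summand $f\mapsto e^{\omega c(n)}(z+n)^{-2s-\mu\omega}f(\psi_n(z))$ is a compact operator — by Montel's theorem a bounded sequence in $H^\infty(\mathbb{D})$ has a subsequence converging uniformly on the compact set $\overline{\psi_n(\mathbb{D})}$, and the bounded holomorphic multiplier does not disturb this — so $\mathcal{L}_{s,\omega}$, being an operator-norm limit of finite sums of compact operators, is compact. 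The same argument applies verbatim to $\mathcal{F}_{s,\omega}$.

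\emph{Part (ii).} On $[0,1]$ the operator $\mathcal{L}_{1,0}$ is the classical Gauss--Kuzmin--Wirsing transfer operator of $T$: $(\mathcal{L}_{1,0}f)(x)=\sum_{n\geq1}|T'(y_n)|^{-1}f(y_n)$ with $y_n=y_n(x)=1/(x+n)$ running over the $T$-preimages of $x$. Applying the substitution $y=1/(x+n)$ termwise — the interchange of sum and integral being justified for $f\geq0$ by monotone convergence, then in general by linearity — gives $\int_0^1\mathcal{L}_{1,0}f=\sum_{n\geq1}\int_{1/(n+1)}^{1/n}f=\int_0^1 f$; replacing $f$ by $|f|$ yields $\|\mathcal{L}_{1,0}f\|_{L^1}\leq\|f\|_{L^1}$, so $\mathcal{L}_{1,0}$ is bounded on $L^1([0,1])$. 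The intertwining identity holds pointwise a.e.\ because $T(1/(x+n))=x$ for $x\in(0,1)$ and $n\geq1$, so each term of $\mathcal{L}_{1,0}(f\,(g\circ T))$ acquires the factor $g(T(1/(x+n)))=g(x)$, which factors out of the sum. Taking $f\equiv1$ and $g=f$ gives $\mathcal{L}_{1,0}(|f|\circ T)(x)=|f|(x)\sum_{n\geq1}(x+n)^{-2}$, whence $\int_0^1|f\circ T|=\int_0^1\mathcal{L}_{1,0}(|f|\circ T)\leq C\|f\|_{L^1}$ with $C:=\sup_{x\in[0,1]}\sum_{n\geq1}(x+n)^{-2}=\pi^2/6>1$; in particular $f\circ T\in L^1([0,1])$.

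\emph{Part (iii): the spectrum of $\mathcal{L}_{1,0}$.} This is where the real work lies. The telescoping identity $\sum_{n\geq1}(z+n)^{-2}\,\frac{1}{(\log2)(1+(z+n)^{-1})}=\frac{1}{\log2}\sum_{n\geq1}\bigl((z+n)^{-1}-(z+n+1)^{-1}\bigr)=\xi(z)$ shows $\mathcal{L}_{1,0}\xi=\xi$, and by the compactness from (i) the rest of $\sigma(\mathcal{L}_{1,0})$ is a null sequence of eigenvalues. What must be proved is that $1$ is algebraically simple and that $\sigma(\mathcal{L}_{1,0})\setminus\{1\}$ is a compact subset of the open unit disc. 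I would deduce this from the $L^1$-convergence $\|\mathcal{L}_{1,0}^n h-(\int_0^1 h)\xi\|_{L^1([0,1])}\to0$ for $h\in L^1([0,1])$ — the classical exactness/mixing of the Gauss map — which I would prefer to prove from scratch, using the uniform hyperbolic contraction of the branches $\psi_n$ on $\mathbb{D}$ (the same feature underlying the compactness) together with a bounded-distortion estimate, rather than quote; this is the main obstacle in the whole proposition. Granting it, an eigenfunction $f\in H^\infty(\mathbb{D})$ with $\mathcal{L}_{1,0}f=\lambda f$, $|\lambda|\geq1$, must vanish unless $\lambda=1$ (for $|\lambda|>1$ the $L^1$-norms $|\lambda|^n\|f\|_{L^1([0,1])}$ blow up; for $|\lambda|=1$, $\lambda\neq1$, non-convergence of $\lambda^n$ forces $\int_0^1 f=0$ and then $f=0$ a.e., so $f\equiv0$ by analyticity), the $1$-eigenspace is $\CC\xi$, and a putative Jordan vector $(\mathcal{L}_{1,0}-I)g=\xi$ is ruled out by $0=\int_0^1(\mathcal{L}_{1,0}g-g)=\int_0^1\xi=1$.

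\emph{Part (iii): perturbation.} With $1$ an isolated algebraically simple eigenvalue of $\mathcal{L}_{1,0}$, the rest is analytic perturbation theory. Fixing a small circle $\gamma$ about $1$ meeting $\sigma(\mathcal{L}_{1,0})$ only at $1$, the holomorphy of $(s,\omega)\mapsto\mathcal{L}_{s,\omega}$ near $(1,0)$ and \cite{Kato} furnish an open $\mathcal{V}\ni(1,0)$ on which the rank-one Riesz projection $\mathcal{P}_{s,\omega}:=\frac{1}{2\pi i}\oint_\gamma(\zeta-\mathcal{L}_{s,\omega})^{-1}\,d\zeta$ is defined and holomorphic, the unique eigenvalue $\lambda(s,\omega)$ of $\mathcal{L}_{s,\omega}$ inside $\gamma$ is holomorphic with $\lambda(1,0)=1$, and, setting $\mathcal{N}_{s,\omega}:=\mathcal{L}_{s,\omega}(I-\mathcal{P}_{s,\omega})$, one has $\mathcal{L}_{s,\omega}=\lambda(s,\omega)\mathcal{P}_{s,\omega}+\mathcal{N}_{s,\omega}$ with $\mathcal{P}_{s,\omega}\mathcal{N}_{s,\omega}=\mathcal{N}_{s,\omega}\mathcal{P}_{s,\omega}=0$ and $\mathcal{P}_{s,\omega}$ projecting onto the $\lambda(s,\omega)$-eigenspace; after shrinking $\mathcal{V}$, $\mathcal{N}_{s,\omega}$ has spectral radius $<1$, since $\sigma(\mathcal{N}_{1,0})=(\sigma(\mathcal{L}_{1,0})\setminus\{1\})\cup\{0\}$ is compactly inside the unit disc and the spectrum varies upper-semicontinuously. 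Finally, writing $\mathcal{P}_{1,0}f=\ell(f)\xi$ with $\ell$ a bounded functional satisfying $\ell(\xi)=1$, the decomposition gives $\mathcal{L}_{1,0}^n f=\ell(f)\xi+\mathcal{N}_{1,0}^n f\to\ell(f)\xi$ uniformly on $\mathbb{D}$, and integrating over $[0,1]$ with $\int_0^1\mathcal{L}_{1,0}^n f=\int_0^1 f$ (part (ii)) and $\int_0^1\xi=1$ yields $\ell(f)=\int_0^1 f$. Thus, once the spectral gap for $\mathcal{L}_{1,0}$ is secured, Proposition \ref{TheOne} reduces to this bookkeeping.
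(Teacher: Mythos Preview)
Your treatment of (i) and (ii) is essentially the same as the paper's (modulo a missing but harmless $e^{O(|\Im(s)|+|\Im(\omega)|)}$ factor from $\arg(z+n)$ in your norm bound). The real divergence is in (iii), and here you take a genuinely different route.

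You propose to deduce the spectral gap for $\mathcal{L}_{1,0}$ from the $L^1$-convergence $\mathcal{L}_{1,0}^n h\to(\int_0^1 h)\xi$, which you acknowledge as the main unfilled step. This is a valid strategy, but proving that convergence ``from scratch'' via distortion estimates is substantially more work than what the paper actually does. The paper's argument is a direct positivity trick that you should know: given any eigenfunction $\hat\xi$ of $\mathcal{L}_{1,0}$ with eigenvalue $\lambda$, normalise so that $\sup_{x\in[0,1]}|\hat\xi(x)|/\xi(x)=1$ is attained at some $x_0$, and then estimate
\[
|\lambda||\hat\xi(x_0)|=|\mathcal{L}_{1,0}\hat\xi(x_0)|\leq\sum_{n\geq1}\frac{1}{(x_0+n)^2}\Bigl|\hat\xi\Bigl(\tfrac{1}{x_0+n}\Bigr)\Bigr|\leq\sum_{n\geq1}\frac{1}{(x_0+n)^2}\xi\Bigl(\tfrac{1}{x_0+n}\Bigr)=\xi(x_0)=|\hat\xi(x_0)|,
\]
giving $|\lambda|\leq1$ immediately. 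If $|\lambda|=1$ both inequalities are equalities, which forces all the $\hat\xi(1/(x_0+n))$ to have a common argument and modulus $\xi(1/(x_0+n))$; since $1/(x_0+n)\to0\in\mathbb{D}$, the holomorphic function $\hat\xi/\xi$ takes a single value on a sequence with a limit point, hence is constant. This gives simplicity of the $1$-eigenspace and absence of other peripheral eigenvalues in a few lines, with no appeal to mixing, cones, or distortion. Your Jordan-block exclusion and the perturbation-theoretic conclusion are then identical to the paper's.

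So: your outline is sound, but you have left the hardest step as a promissory note, whereas the paper's positivity argument dispatches it elementarily. The trade-off is that your route, once completed, would yield the stronger dynamical statement of $L^1$-exactness, while the paper's buys only what is needed (the spectral picture on $H^\infty(\mathbb{D})$) at much lower cost.
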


\begin{proof}
(i). For each $n \geq 1$ and $f \in H^\infty(\mathbb{D})$ it is easily seen that the equation
\[\left(\mathcal{B}_{s,\omega,n}f\right)(z) := \frac{e^{\omega c(n)}}{\left(n+z\right)^{2s+\mu\omega}}f\left(\frac{1}{n+z}\right)\]
defines a holomorphic function on $\mathbb{D}$. Since $|z^u|=e^{-\Im(u)\arg(z)}|z|^{\Re(u)}$ when $z,u \in \mathbb{C}$ with $\Re(z)>0$,  we furthermore have
\begin{equation}\label{eq:opnormbound}|\mathcal{B}_{s,\omega,n}f|_\infty \leq \left(\frac{2}{n}\right)^{2\Re(s)+\mu\Re(\omega)}e^{|c|_\infty\Re(\omega)+\pi \left(\Im(s)+\frac{\mu}{2}\Im(\omega)\right)}|f|_\infty.\end{equation}
In particular $\mathcal{B}_{s,\omega,n}$ is a bounded linear operator on $H^\infty(\mathbb{D})$, and it is clear that $\mathcal{B}_{s,\omega,n}$ depends holomorphically on $(s,\omega)$.
Since $\mathcal{L}_{s,\omega}$ is simply the sum over all $n \geq 1$ of the operators $\mathcal{B}_{s,\omega,n}$, and for $\Re(s)>\frac{1}{2}$ and $|\omega|$ small enough the inequality \eqref{eq:opnormbound} implies that the convergence of this sum is locally uniform with respect to $(s,\omega)$, $\mathcal{L}_{s,\omega}$ is a holomorphic family of bounded operators on $H^\infty(\mathbb{D})$. Taking the sum over $n \geq 2$ shows the family $\mathcal{F}_{s,\omega}$ to have the same properties.

To complete the proof of (i) it suffices to show that each operator $\mathcal{B}_{s,\omega,n}$ is compact. If $h \colon \mathbb{D} \to \mathbb{D}$ is any of the linear fractional transformations $z \mapsto 1/(z+n)$ then we may expand its domain of definition to the larger disc $\hat{\mathbb{D}}:=\{z \in \mathbb{C}\colon |z-\frac{2}{3}|<\frac{17}{16}\}$, and note that $h(\hat{\mathbb{D}})$ is contained in a compact subset of $\mathbb{D}$.  The elements of the set $\{f \circ h \colon f \in H^\infty(\mathbb{D}), |f|_\infty \leq 1\}$ may therefore be simultaneously extended to the larger domain $\hat{\mathbb{D}}$ whilst remaining uniformly bounded and holomorphic. It follows by Montel's compactness principle that this set is a precompact subset of $H^\infty(\mathbb{D})$ and hence the linear operator $f \mapsto f \circ h$ on $H^\infty(\mathbb{D})$ is compact. If $g \in H^\infty(\mathbb{D})$ is any bounded function then the map $f \mapsto g \cdot (f \circ h)$ is the composition of the compact operator $f \mapsto f \circ h$ with the bounded operator $f \mapsto g\cdot f$ and thus is a compact operator on $H^\infty(\mathbb{D})$ with norm $|g|_\infty$. By this principle it follows that each of the operators $\mathcal{B}_{s,\omega,n}$ is compact.

(ii). For all $f \in L^1([0,1])$, by taking the substitution $y=1/(n+x)$ we obtain
\[\int_0^1\! \left(\mathcal{L}_{1,0}f\right)(x)dx = \int_0^1\sum_{n=1}^\infty \frac{1}{(x+n)^2}f\left(\frac{1}{x+n}\right)dx = \sum_{n=1}^\infty \int_{\frac{1}{n+1}}^{\frac{1}{n}}\! f(y)\,dy = \int_0^1\! f(x)dx\]
and an obvious modification shows that $|\mathcal{L}_{1,0}f|_{L^1} \leq |f|_{L^1}$. If $f \in L^1([0,1])$, $g \colon [0,1] \to \mathbb{C}$ is measurable and $x \in [0,1]$ then
\begin{align*}(\mathcal{L}_{1,0}(f\cdot (g \circ T)))(x)&=\sum_{n=1} \frac{1}{(x+n)^2} f\left(\frac{1}{x+n}\right)g\left(T\left(\frac{1}{n+x}\right)\right)\\
&= g(x)\sum_{n=1}^\infty\frac{1}{(x+n)^2}f\left(\frac{1}{x+n}\right)=g(x)(\mathcal{L}_{1,0}f)(x)\end{align*}
so that $\mathcal{L}_{1,0}((g \circ T)f)=g(\mathcal{L}_{1,0}f)$ almost everywhere as claimed. In particular if $f \in L^1([0,1])$ is essentially bounded then clearly $f \circ T \in L^1([0,1])$ and we obtain $\int_0^1|f (Tx)|dx=\int_0^1\mathcal{L}_{1,0}(|f \circ T|)(x)dx \leq  |\mathcal{L}_{1,0}\mathbf{1}|_\infty \int_0^1|f(x)|dx$, which proves the last part of (ii) for $f$; the general case follows by approximation.

(iii). Since $\mathcal{L}_{1,0}$ is compact the nonzero elements of its spectrum are all isolated eigenvalues of finite multiplicity. By direct calculation $\mathcal{L}_{1,0}\xi=\xi$. 
Let $\hat\xi$ be an eigenfunction of $\mathcal{L}_{1,0}$ corresponding to some eigenvalue $\lambda$, and by rescaling if necessary suppose that $\sup_{x \in [0,1]} |\hat\xi(x)|/\xi(x)=1$ with this supremum attained at $x_0$, say. We have
\begin{align*}|\lambda\hat\xi(x_0) |=|\mathcal{L}_{1,0}\hat\xi(x_0)|&=\left|\sum_{n = 1}^\infty\frac{1}{(x_0+n)^2}\hat\xi\left(\frac{1}{x_0+n}\right)\right| \leq \sum_{n=1}^\infty\frac{1}{(x_0+n)^2} \left|\hat\xi\left(\frac{1}{x_0+n}\right)\right| \\
&\leq  \sum_{n=1}^\infty\frac{1}{(x_0+n)^2} \xi\left(\frac{1}{x_0+n}\right) =\left(\mathcal{L}_{1,0}\xi\right)(x_0)=\xi(x_0)=|\hat\xi(x_0)|\end{align*}
and therefore $|\lambda| \leq 1$, so $\mathcal{L}_{1,0}$ has spectral radius $1$. If $|\lambda|=1$ then the above inequalities must be equations. Since $|\hat\xi(x)|\leq \xi(x)$ for all $x \in [0,1]$ we deduce that $|\hat\xi(1/(x_0+n))|=\xi(1/(x_0+n))$ for all $n \geq 1$, since otherwise the second inequality would be strict; and moreover the argument of $\hat\xi(1/(x_0+n))$ must be independent of $n$ since otherwise the first inequality would be strict. It follows that $\hat\xi(1/(x_0+n))\xi(1/(x_0+n))^{-1}$ is constant in $n$, and since $1/(x_0+n) \to 0 \in \mathbb{D}$ as $n \to \infty$ we conclude that the holomorphic function $\hat\xi / \xi$ must be constant on $\mathbb{D}$. We have thus seen that $\ker(\mathcal{L}_{1,0}-\mathrm{Id}_{H^\infty(\mathbb{D})})$ is one-dimensional, that $\rho(\mathcal{L}_{1,0})=1$, and that $\mathcal{L}_{1,0}$ has no other eigenvalues of modulus one. To see that $\mathcal{L}_{1,0}$ has no generalised eigenfunctions at $1$ we observe that if $\hat\xi \in \ker (\mathcal{L}_{1,0}-\mathrm{Id}_{H^\infty(\mathbb{D})})^2$ then $(\mathcal{L}_{1,0}-\mathrm{Id}_{H^\infty(\mathbb{D})})\hat\xi$ must be proportional to $\xi$, but by part (ii) this function has zero integral on $[0,1]$ and hence must be zero. We conclude that $ \ker (\mathcal{L}_{1,0}-\mathrm{Id}_{H^\infty(\mathbb{D})})^2= \ker (\mathcal{L}_{1,0}-\mathrm{Id}_{H^\infty(\mathbb{D})})$ and $1$ is a simple eigenvalue of $\mathcal{L}_{1,0}$ acting on $H^\infty(\mathbb{D})$.

Let $\Gamma\subset \mathbb{C}$ be an anticlockwise-oriented closed curve which encloses $1$ but does not enclose any other point of the spectrum of $\mathcal{L}_{1,0}$. By \cite[p.212]{Kato} we may find an open ball $\mathcal{V}$ containing $(1,0)$ such that the spectrum of $\mathcal{L}_{s,\omega}$ does not intersect $\Gamma$ for any $(s,\omega) \in \mathcal{V}$. Let $\mathcal{P}_{s,\omega}:=-\frac{1}{2\pi i}\int_\Gamma (z\cdot\mathrm{Id}_{H^\infty(\mathbb{D})}-\mathcal{L}_{s,\omega})^{-1}dz$ be the Riesz projection associated to the curve $\Gamma$, which is a projection commuting with $\mathcal{L}_{s,\omega}$ (\cite[p.178]{Kato}) and depends holomorphically on $(s,\omega)$ (\cite[p.369]{Kato}). The image of $\mathcal{P}_{1,0}$ is a closed subspace since that operator is a continuous projection, and is preserved by $\mathcal{L}_{1,0}$ since that operator commutes with $\mathcal{P}_{1,0}$. The spectrum of $\mathcal{L}_{1,0}$ restricted to the image of $\mathcal{P}_{1,0}$ is precisely $\{1\}$ (by \cite[p.178]{Kato}) and in particular the restriction of $\mathcal{L}_{1,0}$ to that space is both compact and invertible. The image of $\mathcal{P}_{1,0}$ is therefore finite-dimensional, and since $1$ is a simple eigenvalue of $\mathcal{L}_{1,0}$ it follows that $\mathrm{rank}\, \mathcal{P}_{1,0}=1$. By \cite[p.212]{Kato} we thus have $\mathrm{rank}\, \mathcal{P}_{s,\omega} = 1$ for all $(s,\omega) \in \mathcal{V}$. The function $\mathcal{P}_{s,\omega}\xi$ is thus an eigenfunction of $\mathcal{L}_{s,\omega}$ corresponding to some eigenvalue $\lambda(s,\omega) \in \mathbb{C}$. Define $\mathcal{N}_{s,\omega}:=\mathcal{L}_{s,\omega}(\mathrm{Id}_{H^\infty(\mathbb{D})}-\mathcal{P}_{s,\omega})$ to obtain $\mathcal{L}_{s,\omega}=\lambda(s,\omega)\mathcal{P}_{s,\omega}+\mathcal{N}_{s,\omega}$. The spectrum of $\mathcal{N}_{1,0}$ equals the spectrum of $\mathcal{L}_{1,0}$ with $1$ removed (\cite[p.178]{Kato}) so the spectral radius of $\mathcal{N}_{1,0}$ is less than one, and by upper semicontinuity of the spectral radius we may replace $\mathcal{V}$ with a smaller set to obtain $\rho(\mathcal{N}_{s,\omega})<1$ for all $(s,\omega) \in \mathcal{V}$.

For each $f \in H^\infty(\mathbb{D})$ we have $\mathcal{L}_{1,0}\mathcal{P}_{1,0}f=\mathcal{P}_{1,0}f$ so that $\mathcal{P}_{1,0}f$ is proportional to $\xi$, and since $\rho(\mathcal{N}_{1,0})<1$ we have $\int_0^1(\mathcal{P}_{1,0}f)(x)dx=\lim_{n \to \infty}\int_0^1(\mathcal{L}_{1,0}^nf)(x)dx=\int_0^1f(x)dx$ using (ii), which shows that $\mathcal{P}_{1,0}f=(\int_0^1f(x)dx)\xi$ as required. Since in particular $\mathcal{P}_{1,0}\xi=\xi$ it follows that $(\mathcal{P}_{s,\omega}\xi)(0)\neq 0$ when $(s,\omega)$ is close to zero, so by taking $\mathcal{V}$ smaller if necessary we may write $\lambda(s,\omega)=((\mathcal{P}_{s,\omega }\xi)(0))^{-1}(\mathcal{L}_{s,\omega}\mathcal{P}_{s,\omega }\xi)(0)$ for all $(s,\omega) \in \mathcal{V}$ which shows that $\lambda$ is holomorphic.
\end{proof}
The elimination of unit eigenvalues on the line $\Re(s)=1$ is classical when transfer operators interact with Tauberian arguments, see for example \cite[Prop. 6.1]{PP}. For versions of this argument in the context of transfer operators associated to number-theoretic algorithms, see for example \cite[Lemma 6.15]{Mo14},\cite[\S5.5]{Val}.
\begin{proposition}\label{they-said-i-got-too-much-flow}
Let $s \in \CC$ such that $\Re(s) \geq 1$ and $s \neq 1$. Then $\rho(\mathcal{L}_{s,0})<1$.
\end{proposition}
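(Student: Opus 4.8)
The plan is to follow the classical Tauberian-type argument for transfer operators, adapted to the weighted operator $\mathcal{L}_{s,0}$. Since $\mathcal{L}_{s,0}$ is compact by Proposition~\ref{TheOne}(i), its spectral radius is attained at an eigenvalue, so it suffices to show that every eigenvalue $\lambda$ of $\mathcal{L}_{s,0}$ satisfies $|\lambda|<1$ whenever $\Re(s)\geq 1$ and $s\neq 1$. First I would record the basic domination estimate: for any $f\in H^\infty(\mathbb{D})$ and any $z=x\in[0,1]$,
\[
\bigl|(\mathcal{L}_{s,0}f)(x)\bigr|\leq \sum_{n=1}^\infty \frac{1}{(x+n)^{2\Re(s)}}\Bigl|f\Bigl(\tfrac{1}{x+n}\Bigr)\Bigr|\leq \sum_{n=1}^\infty \frac{1}{(x+n)^{2}}\Bigl|f\Bigl(\tfrac{1}{x+n}\Bigr)\Bigr|=\bigl(\mathcal{L}_{1,0}|f|\bigr)(x),
\]
using $\Re(s)\geq 1$ and $x+n\geq 1$, together with the fact that $|(x+n)^{-2s}|=(x+n)^{-2\Re(s)}$ for real $x$. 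Thus on $[0,1]$ the modulus of $\mathcal{L}_{s,0}f$ is pointwise dominated by $\mathcal{L}_{1,0}$ applied to $|f|$.

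Now suppose $\mathcal{L}_{s,0}\hat\xi=\lambda\hat\xi$ with $|\lambda|=\rho(\mathcal{L}_{s,0})$; I want to derive a contradiction with $s\neq 1$ unless $|\lambda|<1$. Rescale so that $\sup_{x\in[0,1]}|\hat\xi(x)|/\xi(x)=1$, attained at some $x_0\in[0,1]$ (this sup is attained and positive by continuity and compactness, exactly as in the proof of Proposition~\ref{TheOne}(iii); if it were zero then $\hat\xi$ vanishes on $[0,1]$ and hence on $\mathbb{D}$). Then, mimicking the chain of inequalities in Proposition~\ref{TheOne}(iii) but with the extra factor $(x_0+n)^{2(1-\Re(s))}\leq 1$,
\[
|\lambda|\,\xi(x_0)=|\lambda\hat\xi(x_0)|\leq \sum_{n=1}^\infty \frac{1}{(x_0+n)^{2\Re(s)}}\Bigl|\hat\xi\Bigl(\tfrac{1}{x_0+n}\Bigr)\Bigr|\leq \sum_{n=1}^\infty \frac{1}{(x_0+n)^{2}}\xi\Bigl(\tfrac{1}{x_0+n}\Bigr)=\xi(x_0),
\]
so $|\lambda|\leq 1$. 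If $|\lambda|<1$ we are done, so assume $|\lambda|=1$; then all three inequalities above are equalities. Equality in the first forces $(x_0+n)^{2\Re(s)}=(x_0+n)^{2}$ for every $n$ contributing a nonzero term, hence (since $x_0+n>1$) forces $\Re(s)=1$; equality in the second forces $|\hat\xi(1/(x_0+n))|=\xi(1/(x_0+n))$ for all $n$, and equality in the modulus-of-a-sum forces the arguments of the terms $(x_0+n)^{-2s}\hat\xi(1/(x_0+n))$ to be independent of $n$. Writing $s=1+it$ with $t\in\mathbb{R}\setminus\{0\}$, the argument of the $n$-th term is $-2t\log(x_0+n)+\arg\hat\xi(1/(x_0+n))$, and since $|\hat\xi/\xi|\equiv 1$ at the points $1/(x_0+n)$ and these accumulate at $0\in\mathbb{D}$, the holomorphic function $\hat\xi/\xi$ has modulus $1$ on a set with an accumulation point; by the maximum principle $\hat\xi/\xi$ is a unimodular constant, so $\arg\hat\xi(1/(x_0+n))$ is independent of $n$. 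Then $-2t\log(x_0+n)$ must be independent of $n\pmod{2\pi}$, which is impossible for $t\neq 0$ since $\log(x_0+n)$ takes infinitely many values with differences tending to $0$. This contradiction shows $|\lambda|<1$, completing the proof.

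The main obstacle is the final step: extracting a genuine contradiction from the equality case when $\Re(s)=1$ but $s\neq 1$. The point is that the equality analysis in Proposition~\ref{TheOne}(iii) already pins down $\hat\xi/\xi$ to be constant on $\mathbb{D}$, so the only remaining freedom is the phase factor $(x_0+n)^{-2it}$, and one must observe that the constraint "$-2t\log(x_0+n)$ is constant mod $2\pi$ for all $n\geq 1$" cannot hold for nonzero $t$; this is where the condition $s\neq 1$ is consumed. A minor technical point to handle carefully is that one must apply the argument at a real point $x_0\in[0,1]$ so that $|(x_0+n)^{-2s}|=(x_0+n)^{-2\Re(s)}$ exactly (no $\arg$ contribution), which is why the supremum of $|\hat\xi|/\xi$ is taken over $[0,1]$ rather than over $\mathbb{D}$; the fact that the extremal ratio on $[0,1]$ equals $1$ and controls $\hat\xi$ on all of $\mathbb{D}$ then follows, as before, from the maximum principle once one knows $\hat\xi/\xi$ is constant.
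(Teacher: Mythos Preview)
Your overall strategy is correct and coincides with the paper's approach: normalise an eigenfunction so that $\sup_{[0,1]}|\hat\xi|/\xi=1$ is attained at some $x_0$, run the chain of inequalities to get $|\lambda|\leq 1$, and then analyse the equality case. The reduction to $\Re(s)=1$ via the strict inequality $(x_0+n)^{-2\Re(s)}<(x_0+n)^{-2}$ for $n\geq 2$ is also fine.

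There is, however, a genuine gap in your treatment of the equality case when $s=1+it$, $t\neq 0$. You assert that because $|\hat\xi/\xi|=1$ at the points $1/(x_0+n)$, which accumulate at $0\in\mathbb{D}$, the maximum principle forces $\hat\xi/\xi$ to be a unimodular constant. This is not valid: the maximum modulus principle requires $|\hat\xi/\xi|$ to attain its supremum over $\mathbb{D}$ at an interior point, whereas your normalisation only gives $|\hat\xi/\xi|\leq 1$ on $[0,1]$, not on all of $\mathbb{D}$. A holomorphic function can have modulus exactly $1$ on a set with an interior accumulation point without being constant (think of $e^{i\alpha z}$ on a disc meeting the real axis). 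Nor can you recycle the argument of Proposition~\ref{TheOne}(iii) directly: there the key point was that $\hat\xi(1/(x_0+n))/\xi(1/(x_0+n))$ is the \emph{same} complex number for all $n$, so the identity theorem applies; here, by contrast, the equality conditions give $\hat\xi(1/(x_0+n))/\xi(1/(x_0+n))=e^{i\theta}(x_0+n)^{2it}$, which varies with $n$ when $t\neq 0$, so neither the identity theorem nor the maximum principle yields constancy.

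The fix is exactly what the paper does and is actually simpler than what you attempted. From the equality conditions you already have, for some fixed $\theta$,
\[
\frac{e^{i\theta}}{(x_0+n)^{2it}}=\frac{\xi\!\left(\tfrac{1}{x_0+n}\right)}{\hat\xi\!\left(\tfrac{1}{x_0+n}\right)}\qquad(n\geq 1).
\]
Since $|\hat\xi(1/(x_0+n))|=\xi(1/(x_0+n))>0$, continuity gives $|\hat\xi(0)|=\xi(0)\neq 0$, so the right-hand side converges to $\xi(0)/\hat\xi(0)$ as $n\to\infty$. But the left-hand side has no limit when $t\neq 0$. This contradiction finishes the proof without any appeal to constancy of $\hat\xi/\xi$.
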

\begin{proof}
Let us first show that if $\Re(s) > 1$ then $\rho(\mathcal{L}_{s,0}) <1$. Using Proposition \ref{TheOne}(i) we may choose an eigenfunction $\xi_s$ of $\mathcal{L}_{s,0}$ which corresponds an eigenvalue of modulus $\rho(\mathcal{L}_{s,0})$. By rescaling this function if necessary we assume that $\sup_{x\in [0,1]} |\xi_{s}(x)|/\xi(x)=1$ with this supremum attained at some point $x_0 \in [0,1]$. For every $n \geq 1$ we have $0<(x_0+n)^{-2}<1$ and therefore $|(x_0+n)^{-2s}|<(x_0+n)^{-2}$. Thus
\begin{align*}\rho(\mathcal{L}_{s,0})\left|\xi_{s}(x_0)\right|&=\left|\mathcal{L}_{s,0}\xi_{s}(x_0)\right|\leq\sum_{n=1}^\infty \left|\frac{1}{(x_0+n)^{2s}} \xi_{s}\left(\frac{1}{x_0+n}\right)\right|\\ &< \sum_{n=1}^\infty \frac{1}{(x_0+n)^2}\xi\left(\frac{1}{x_0+n}\right)
= \left(\mathcal{L}_{1,0}\xi\right)(x_0) = \xi(x_0) =|\xi_{s}(x_0)|\end{align*}
and therefore $\rho(\mathcal{L}_{s,0})<1$ as claimed. 

We now vary this argument to show that $\rho(\mathcal{L}_{1+it,0})<1$ for all nonzero real numbers $t$. Given $t\in\mathbb{R}$, let $\xi_t$ be an eigenfunction of $\mathcal{L}_{1+it,0}$ which corresponds to an eigenvalue of modulus $\rho(\mathcal{L}_{1+it,0})$. In the same manner we rescale $\xi_t$ so that $\sup_{x\in[0,1]} |\xi_t(x)|/\xi(x)=1$ and choose $x_0 \in [0,1]$ attaining this supremum. We similarly estimate
\begin{align*}\rho(\mathcal{L}_{1+it,0})|\xi_t(x_0)|&=|\mathcal{L}_{1+it,0}\xi_{t}(x_0)|=\left|\sum_{n=1}^\infty \frac{1}{(x_0+n)^{2+2it}} \xi_t\left(\frac{1}{x_0+n}\right)\right|\\
&\leq\sum_{n=1}^\infty \left|\frac{1}{(x_0+n)^{2+2it}} \xi_t\left(\frac{1}{x_0+n}\right)\right| \leq\sum_{n=1}^\infty \frac{1}{(x_0+n)^{2}} \xi\left(\frac{1}{x_0+n}\right)\\
&= \left(\mathcal{L}_{1,0}\xi\right)(x_0) = \xi(x_0) = |\xi_t(x_0)|\end{align*}
which in particular establishes $ \rho(\mathcal{L}_{1+it,0})\leq 1$. Let us suppose that $\rho(\mathcal{L}_{1+it,0})=1$, in which case both inequalities above are equations. As with the proof of Proposition \ref{TheOne}(i) this is only possible if the argument of $(x_0+n)^{-2-2it} \xi_t(1/(x_0+n))$ is independent of $n$ and the modulus of that expression is equal to $(x_0+n)^{-2}\xi(1/(x_0+n))$. It follows that for some fixed $\theta\in\mathbb{R}$ and for all $n \geq 1$
\[\frac{e^{i\theta}}{(x_0+n)^{2+2it}}\xi_t\left(\frac{1}{x_0+n}\right)=\frac{1}{(x_0+n)^2}\xi\left(\frac{1}{x_0+n}\right)\]
and therefore
\[\lim_{n \to \infty}\frac{e^{i\theta}}{(x_0+n)^{2it}}=\lim_{n \to \infty} \frac{\xi\left(\frac{1}{x_0+n}\right)}{\xi_t\left(\frac{1}{x_0+n}\right)}=\frac{\xi(0)}{\xi_t(0)},\]
but the former sequence is divergent if $t \neq 0$. We conclude that if $\rho(\mathcal{L}_{1+it,0})=1$ then necessarily $t=0$, and this completes the proof of the proposition.
\end{proof}

\begin{proposition}\label{a-owl}
Let $\psi \colon [0,1] \to \mathbb{C}$ be as defined in Theorem \ref{qfwfq}. The function $\lambda$ satisfies
\[\frac{\partial \lambda}{\partial s}(1,0) = -\mathfrak{h}(T)<0,\qquad\frac{\partial \lambda}{\partial \omega}(1,0) = 0,\]
\[\frac{\partial^2 \lambda}{\partial \omega^2}(1,0)= \lim_{n \to \infty} \frac{1}{n}\int_0^1\left(\sum_{k=0}^{n-1} \psi(T^kx)\right)^2\xi(x)dx>0.\]
\end{proposition}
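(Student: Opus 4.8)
The plan is to read off the three derivatives from first- and second-order analytic perturbation theory for the simple isolated eigenvalue $\lambda(s,\omega)$ supplied by Proposition~\ref{TheOne}(iii), feeding in the description of $\mathcal{L}_{1,0}$ from Proposition~\ref{TheOne}(ii). The pivotal auxiliary fact is that the functional $\ell(f):=\int_0^1 f(x)\,dx$ is a left eigenvector of $\mathcal{L}_{1,0}$ at the eigenvalue $1$ (immediate from the first two assertions of Proposition~\ref{TheOne}(ii)) and that $\ell(\xi)=1$. Since $\mathcal{L}_{s,\omega}(\mathcal{P}_{s,\omega}\xi)=\lambda(s,\omega)\mathcal{P}_{s,\omega}\xi$ throughout $\mathcal{V}$, I would differentiate this identity at $(1,0)$ and apply $\ell$: because $\ell\circ\mathcal{L}_{1,0}=\ell$, $\lambda(1,0)=1$ and $\ell(\xi)=1$, the term carrying the derivative of $\mathcal{P}_{s,\omega}\xi$ cancels, leaving $\frac{\partial\lambda}{\partial s}(1,0)=\ell\big(\frac{\partial}{\partial s}\mathcal{L}_{s,0}\big|_{s=1}\xi\big)$ and $\frac{\partial\lambda}{\partial\omega}(1,0)=\ell\big(\frac{\partial}{\partial\omega}\mathcal{L}_{s,\omega}\big|_{(1,0)}\xi\big)$. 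A termwise differentiation of \eqref{TO}, combined with the substitution $y=1/(z+n)$ and the identities $-2\log(z+n)=2\log y$ and $c(n)-\mu\log(z+n)=\psi(y)$ which hold for $z\in[0,1]$, then shows that on $[0,1]$ one has $\frac{\partial}{\partial s}\mathcal{L}_{s,0}\big|_{s=1}\xi=\mathcal{L}_{1,0}g$ with $g(x):=2\xi(x)\log x\in L^1([0,1])$, and $\frac{\partial}{\partial\omega}\mathcal{L}_{s,\omega}\big|_{(1,0)}\xi=\mathcal{L}_{1,0}(\psi\xi)$, where $\mathcal{L}_{1,0}$ is being used in its $L^1([0,1])$ incarnation. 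Applying $\ell$ and the invariance $\int_0^1\mathcal{L}_{1,0}h\,dx=\int_0^1 h\,dx$ yields $\frac{\partial\lambda}{\partial s}(1,0)=\int_0^1 2\xi(x)\log x\,dx=-\mathfrak{h}(T)<0$ (using $\mathfrak{h}(T)=\int_0^1(-2\log x)\xi(x)\,dx$) and $\frac{\partial\lambda}{\partial\omega}(1,0)=\int_0^1\psi\xi\,dx=\sum_{n\ge1}c(n)\int_{1/(n+1)}^{1/n}\xi\,dx+\mu\int_0^1\xi(x)\log x\,dx=\tfrac12\mu\mathfrak{h}(T)-\tfrac12\mu\mathfrak{h}(T)=0$, the cancellation being precisely what the definition \eqref{mean} of $\mu$ is engineered to produce.

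For the second $\omega$-derivative I would differentiate the eigenvalue relation $\mathcal{L}_{1,\omega}v(\omega)=\lambda(1,\omega)v(\omega)$ twice at $\omega=0$, normalising $v(\omega):=\mathcal{P}_{1,\omega}\xi/\ell(\mathcal{P}_{1,\omega}\xi)$ so that $\ell(v(\omega))\equiv1$, and then apply $\ell$. Because $\ell\circ\mathcal{L}_{1,0}=\ell$, $\ell(v)\equiv1$ and $\frac{\partial\lambda}{\partial\omega}(1,0)=0$, every first-order term drops out and one is left with $\frac{\partial^2\lambda}{\partial\omega^2}(1,0)=\ell(A''\xi)+2\ell\big(A'v'(0)\big)$, where $A':=\frac{\partial}{\partial\omega}\mathcal{L}_{1,\omega}\big|_{\omega=0}$, $A'':=\frac{\partial^2}{\partial\omega^2}\mathcal{L}_{1,\omega}\big|_{\omega=0}$, and $v'(0)$ is determined by $(\mathrm{Id}-\mathcal{L}_{1,0})v'(0)=A'\xi$ together with $\ell(v'(0))=0$. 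The same substitution as before gives $A''\xi=\mathcal{L}_{1,0}(\psi^2\xi)$ on $[0,1]$, so $\ell(A''\xi)=\int_0^1\psi^2\xi\,dx$. Since $A'\xi=\mathcal{L}_{1,0}(\psi\xi)$ has vanishing integral over $[0,1]$ it lies in the range of $\mathrm{Id}-\mathcal{P}_{1,0}$, on which $\mathcal{L}_{1,0}$ agrees with $\mathcal{N}_{1,0}$ and hence has spectral radius $<1$ by Proposition~\ref{TheOne}(iii); in particular $A'\xi$ extends holomorphically to $\mathbb{D}$ and the Neumann series converges geometrically in $H^\infty(\mathbb{D})$, giving $v'(0)=\sum_{j\ge0}\mathcal{L}_{1,0}^{j}(A'\xi)=\sum_{k\ge1}\mathcal{L}_{1,0}^{k}(\psi\xi)$. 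Finally, the identity $\ell(A'f)=\int_0^1\psi f\,dx$ (another change of variables) together with the adjointness relation $\int_0^1(\mathcal{L}_{1,0}^{k}u)v\,dx=\int_0^1 u\,(v\circ T^k)\,dx$, obtained by iterating Proposition~\ref{TheOne}(ii), yields $\ell\big(A'\mathcal{L}_{1,0}^{k}(\psi\xi)\big)=\int_0^1\psi\,(\psi\circ T^k)\,\xi\,dx$ for every $k\ge1$, whence
\[\frac{\partial^2\lambda}{\partial\omega^2}(1,0)=\int_0^1\psi^2\xi\,dx+2\sum_{k\ge1}\int_0^1\psi\,(\psi\circ T^k)\,\xi\,dx.\]
A routine Ces\`aro argument---expand the square, use the $T$-invariance of $\xi\,dx$, and invoke the geometric decay of the correlations $\int_0^1\psi\,(\psi\circ T^k)\,\xi\,dx$ just obtained---identifies the right-hand side with $\lim_{n\to\infty}\frac1n\int_0^1\big(\sum_{k=0}^{n-1}\psi(T^kx)\big)^2\xi(x)\,dx$ and shows that this limit exists.

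To establish strict positivity I would argue by contradiction, supposing the common value above to be $0$. Here $\psi\in L^2([0,1])$ (since $(\log x)^2$ is integrable and $\xi$ is bounded) and $\int_0^1\psi\xi\,dx=0$. The geometric decay of $\mathcal{L}_{1,0}^{k}(\psi\xi)$ in $H^\infty(\mathbb{D})$ furnishes a decomposition of Gordin--Liv\v{s}ic type, $\psi=m+b\circ T-b$, in which $\mathcal{L}_{1,0}(m\xi)=0$ and $b:=\xi^{-1}\sum_{k\ge1}\mathcal{L}_{1,0}^{k}(\psi\xi)=\xi^{-1}v'(0)$ is bounded and holomorphic on $\mathbb{D}$ (each summand lies in $H^\infty(\mathbb{D})$, the series converges geometrically, and $\xi$ is holomorphic and bounded below on $[0,1]$); a Ces\`aro computation as above, now using $\mathcal{L}_{1,0}(m\xi)=0$, shows that the variance equals $\int_0^1 m^2\xi\,dx$. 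Hence $m=0$, so $\psi=b\circ T-b$, and since both sides are continuous on each interval $(\tfrac1{n+1},\tfrac1n)$ this holds pointwise there. Evaluating at $x=1/(n+y)$ for a fixed $y\in(0,1)$, so that $Tx=y$, gives $c(n)-\mu\log(n+y)=b(y)-b\big(1/(n+y)\big)$; letting $n\to\infty$, the right-hand side tends to $b(y)-b(0)$ while the left-hand side diverges to $-\infty$, since $c$ is bounded and $\mu>0$---the latter holding precisely because $c$ is nonnegative and not identically zero. This contradiction completes the proof.

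The main obstacle I anticipate is bookkeeping rather than conceptual: $\psi$ and $\psi\xi$ are not holomorphic, so one must keep careful track of whether $\mathcal{L}_{1,0}$ is being used as a bounded operator on $H^\infty(\mathbb{D})$---to which the spectral decomposition of Proposition~\ref{TheOne}(iii) applies---or through its explicit kernel on $L^1([0,1])$ as in Proposition~\ref{TheOne}(ii). The reconciling point is that although $\psi\xi\notin H^\infty(\mathbb{D})$, its image $\mathcal{L}_{1,0}(\psi\xi)$ does extend holomorphically to $\mathbb{D}$---it is exactly $\frac{\partial}{\partial\omega}\mathcal{L}_{1,\omega}\big|_{\omega=0}\xi$---and this is what lets the geometric-decay estimates be applied to the tail series $\sum_{k\ge1}\mathcal{L}_{1,0}^{k}(\psi\xi)$ occurring both in the variance formula and in the coboundary $b$.
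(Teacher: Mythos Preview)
Your proposal is correct and is, in substance, the same argument as the paper's. In both cases the derivatives are read off by differentiating the eigen-relation and integrating against the left eigenvector $\ell(f)=\int_0^1 f\,dx$, the key computational step being the rewriting of $\partial_\omega\mathcal{L}_{1,\omega}|_{\omega=0}f$ as $\mathcal{L}_{1,0}(\psi f)$ on $[0,1]$, and positivity is obtained from the identical Liv\v{s}ic-type coboundary $b=\chi=\xi^{-1}\sum_{k\ge 1}\mathcal{L}_{1,0}^k(\psi\xi)$ (your $b$ is exactly the paper's $\chi$, your $m$ its $\hat\psi$). The only organisational difference concerns $\lambda_{\omega\omega}$: the paper iterates to the $n$-step identity $\mathcal{L}_{1,0}^n\big(e^{\omega S_n\psi}\xi_{1,\omega}\big)=\lambda(1,\omega)^n\xi_{1,\omega}$, differentiates twice, and integrates to obtain, for every $n$, an exact expression $n\lambda_{\omega\omega}(1,0)=\int (S_n\psi)^2\xi\,dx+2\int(S_n\psi)\xi_{1,0}^\omega\,dx$, then lets $n\to\infty$; you instead apply the one-step second-order perturbation formula to get the Green--Kubo series $\int\psi^2\xi+2\sum_{k\ge 1}\int\psi(\psi\circ T^k)\xi$ and recover the Birkhoff-sum limit by Ces\`aro. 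These are equivalent packagings of the same computation (the paper's $\xi_{1,0}^\omega$ plays the role of your $v'(0)$), and your caution about toggling between the $H^\infty(\mathbb{D})$ and $L^1([0,1])$ incarnations of $\mathcal{L}_{1,0}$ is exactly the bookkeeping the paper also has to do.
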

\begin{proof}
Let us write $\xi_{s,\omega}:=\mathcal{P}_{s,\omega}\xi$ when $(s,\omega) \in \mathcal{V}$ so that $\mathcal{L}_{s,\omega}\xi_{s,\omega}=\lambda(s,\omega)\xi_{s,\omega}$ in this region. Since $\mathcal{P}_{1,0}\xi=\xi$ we have $\xi_{1,0}=\xi$. Since $\mathcal{P}_{s,\omega}$ is holomorphic, $\xi_{s,\omega}$ is infinitely differentiable in $H^\infty(\mathbb{D})$ and hence also in $L^1([0,1])$: we use the notation $\xi^s$,$\xi^\omega$ to refer to the first partial derivatives with respect to $s$ and $\omega$, and $\xi^{\omega\omega}$ for the second derivative with respect to $\omega$. We denote the corresponding derivatives of $\lambda$ by $\lambda_s$,$\lambda_\omega$ and $\lambda_{\omega\omega}$. 

We claim that $s \mapsto |T'|^{1-s}\xi_{s,0}$ is a holomorphic map from a small neighbourhood of $1$ into $L^1([0,1])$. By the holomorphicity of $s \mapsto \xi_{s,0}$ near $s=1$ we may write $\xi_{s,0}=\sum_{n=0}^\infty (s-1)^ng_n$ where each $g_n \in H^\infty(\mathbb{D})$ and $\limsup_{n \to \infty}|g_n|_\infty^{1/n}<\infty$. On the other hand $|T'(x)|^{1-s}=x^{2-2s}=\sum_{n=0}^\infty (\log x)^n(2-2s)^n/n!$ for $x \in(0,1]$, and since $\int_0^1|\log x|^ndx=\int_0^\infty y^ne^{-y}dy=n!$ it follows that the function $s \mapsto |T'|^{1-s}$ is a holomorphic map from a neighbourhood of $1$ into $L^1([0,1])$. It is now a simple exercise to write $s \mapsto |T'|^{1-s}\xi_{s,0}$ as a power series in $(s-1)$ with coefficients in $L^1([0,1])$ and a nonzero radius of convergence, which proves the claim. For $s$ close to $1$ and $x \in (0,1)$ we may write
\begin{align*}(\mathcal{L}_{s,0}\xi_{s,0})(x)&=\sum_{n=1}^\infty \frac{1}{(x+n)^{2s}}\xi_{s,0}\left(\frac{1}{x+n}\right)\\
&=\sum_{n=1}^\infty \frac{1}{(x+n)^2}\left|T'\left(\frac{1}{x+n}\right)\right|^{1-s}\xi_{s,0}\left(\frac{1}{x+n}\right)=\mathcal{L}_{1,0}\left(|T'|^{1-s}\xi_{s,0}\right)(x)\end{align*}
 and hence the equation $\mathcal{L}_{1,0} \left(|T'|^{1-s} \xi_{s,0}\right)=\lambda(s,0)\xi_{s,0}$ is valid in $L^1([0,1])$. Differentiating this equation at $s=1$ we obtain
\[\mathcal{L}_{1,0}\left(-\log |T'|\xi + \xi^s_{1,0}\right) = \lambda_s(1,0)\xi+\xi^s_{1,0}\]
since $\xi_{1,0}=\xi$. Integration yields $\lambda_s(1,0)=-\int_0^1 \log|T'(x)|\xi(x)dx=-\mathfrak{h}(T)$ via Proposition \ref{TheOne}(ii) as desired.

We next claim that for each $m \geq 1$ the map $\omega \mapsto \exp(\omega \sum_{k=0}^{m-1}\psi \circ T^k)\xi_{1,\omega}$ is a holomorphic function from a small neighbourhood of $0 \in \mathbb{C}$ into $L^1([0,1])$. Since we may write $\xi_{1,\omega}=\sum_{n=0}^\infty \omega^n h_n$ for some sequence of functions $h_n \in H^\infty(\mathbb{D})$ with $\limsup_{n \to \infty} |h_n|_\infty^{1/n}<\infty$, it suffices to show that $\omega \mapsto \exp(\omega \sum_{k=0}^{m-1}\psi \circ T^k)$ is a holomorphic map into $L^1([0,1])$. Since $|\psi(x)|\leq |c|_\infty+|2\mu\log x|$ for all $x \in (0,1]$ it is clear that $\int_0^1|\psi(x)|^ndx \leq M^nn!$ for some constant $M>0$ depending on $c$, and by Proposition \ref{TheOne}(ii) there is a constant $K>0$ such that $\int_0^1 |\psi ( T^kx)|^n dx\leq K^k\int_0^1|\psi(x)|^ndx \leq K^kM^nn!$ for every $k \geq 0$. Using the 
generalised H\"older inequality
\begin{align*}\int_0^1\left|\sum_{k=0}^{m-1}\psi(T^kx)\right|^ndx &\leq \sum_{k_1,\ldots,k_n=0}^{m-1}\int_0^1 \prod_{i=0}^{n-1}\left|\psi(T^{k_i}x)\right| dx\\
&\leq\sum_{k_1,\ldots,k_n=0}^{m-1}\prod_{i=0}^{n-1}\left(\int_0^1 \left|\psi(T^{k_i}x)\right|^{n}dx\right)^{\frac{1}{n}}\leq m^nM^nK^{m} n!\end{align*}
and it follows that the power series $\sum_{n=0}^\infty \omega^n \left(\sum_{k=0}^{m-1}\psi \circ T^k\right)^n/n!$ with coefficients in $L^1([0,1])$ has nonzero radius of convergence, completing the proof of the claim.

Now, if $\omega$ is close to zero then since $\lambda(1,\omega)\xi_{1,\omega}=\mathcal{L}_{1,\omega}\xi_{1,\omega}$, using the definitions of $\mathcal{L}_{1,\omega}$ and $\psi$ we have
\begin{equation}\label{eq:instep}\lambda(1,\omega)\xi_{1,\omega}(x)=\sum_{n=1}^\infty\frac{1}{(x+n)^2}e^{\omega \psi\left(\frac{1}{x+n}\right)}f\left(\frac{1}{x+n}\right)=\mathcal{L}_{1,0}\left(e^{\omega\psi}\xi_{1,\omega}\right)(x)\end{equation}
for almost every $x \in [0,1]$. We claim that for all $n \geq 0$ the equation
\begin{equation}\label{eq:indhyp}\mathcal{L}_{1,0}^n\left(e^{\omega \sum_{k=0}^{n-1}\psi \circ T^k}\xi_{1,\omega}\right)=\lambda(1,\omega)^n\xi_{1,\omega}\end{equation}
in $L^1([0,1])$ is satisfied for all $\omega$ sufficiently close to zero. Clearly the claim holds for $n=0$. Given the validity of the claim for some $n\geq 0$, if $\omega$ is small enough that additionally $e^{\omega \sum_{k=0}^n \psi \circ T^k}\xi_{1,\omega} \in L^1([0,1])$ then we may deduce
\begin{align*}\mathcal{L}_{1,0}^{n+1}\left(e^{\omega \sum_{k=0}^{n}\psi \circ T^k}\right)&=\mathcal{L}_{1,0}^n\left(e^{\omega \sum_{k=0}^{n-1}\psi \circ T^k}\mathcal{L}_{1,0}(e^{\omega \psi}\xi_{1,\omega})\right)\\
&=\lambda(1,\omega)\mathcal{L}_{1,0}^n\left(e^{\omega \sum_{k=0}^{n-1}\psi \circ T^k}\xi_{1,\omega}\right)=\lambda(1,\omega)^{n+1}\xi_{1,\omega}\end{align*}
using Proposition \ref{TheOne}(ii), \eqref{eq:instep} and \eqref{eq:indhyp}, and the claim follows by induction.
 Differentiating \eqref{eq:indhyp} in $L^1([0,1])$ yields
\begin{eqnarray*}\lefteqn{\mathcal{L}_{1,0}^n\left(e^{\omega \sum_{k=0}^{n-1}\psi \circ T^k} \left(\sum_{k=0}^{n-1}(\psi \circ T^k)\xi_{1,\omega}+ \xi^\omega_{1,\omega}\right)\right)}\\
& =& n\lambda(1,\omega)^{n-1}\lambda_\omega(1,\omega)\xi_{1,\omega}+\lambda(1,\omega)^n\xi^\omega_{1,\omega}.\end{eqnarray*}
Setting $n=1$, $\omega=0$ and integrating over $[0,1]$ gives $\lambda_\omega(1,0)=\int_0^1\psi(x)\xi(x)dx=0$ as claimed. Differentiating a second time, setting $\omega=0$ and substituting $\xi_{1,0}=\xi$ and $\lambda_\omega(1,0)=0$ yields the expression
\[\mathcal{L}_{1,0}^n\left(\left(\sum_{k=0}^{n-1}\psi\circ T^k\right)^2\xi+ 2\left(\sum_{k=0}^{n-1}\psi\circ T^k\right)\xi^\omega_{1,0}+\xi^{\omega\omega}_{1,0}\right) = n\lambda_{\omega\omega}(1,0)\xi+\xi^{\omega\omega}_{1,0}.\]
Integrating, cancelling $\int_0^1\xi^{\omega\omega}_{1,0}(x)dx$ from both sides and dividing by $n$ we find that
\[\lambda_{\omega\omega}(1,0)=\frac{1}{n}\int_0^1\left(\left(\sum_{k=0}^{n-1}\psi(T^kx)\right)^2\xi(x)+ 2\left(\sum_{k=0}^{n-1}\psi(T^kx)\right)\xi^\omega_{1,0}(x)\right)dx\]
for all $n \geq 1$. Using Proposition \ref{TheOne}(iii) we have
\[\lim_{n \to \infty}\frac{1}{n}\sum_{k=0}^{n-1}\mathcal{L}_{1,0}^k\xi^\omega_{1,0}= \lim_{n \to \infty}\frac{1}{n}\sum_{k=0}^{n-1}\mathcal{P}_{1,0}^k\xi^\omega_{1,0}=\left(\int_0^1\xi^\omega_{1,0}(x)dx\right)\cdot \xi\]
in the uniform norm, and therefore
\begin{equation}\label{pupihed}\lim_{n \to \infty}\frac{1}{n}\int_0^1 \sum_{k=0}^{n-1}\psi(T^kx)\xi^\omega_{1,0}(x)dx =\lim_{n \to \infty}\int_0^1 \psi(x)\left(\frac{1}{n}\sum_{k=0}^{n-1}\mathcal{L}_{1,0}^k\xi^\omega_{1,0}\right)(x)dx =0\end{equation}
using Proposition \ref{TheOne}(ii) on each summand together with $\int_0^1\psi(x)\xi(x)dx=0$. Thus
\[\lambda_{\omega\omega}(1,0)=\lim_{n \to \infty}\frac{1}{n}\int_0^1 \left(\sum_{k=0}^{n-1}\psi(T^kx)\right)^2\xi(x)dx\geq 0\]
as required. It remains to prove that this quantity is nonzero. Define
\[\Psi(z):=\sum_{n=1}^\infty \frac{c(n)-\mu\log(z+n)}{(z+n)^2}\xi\left(\frac{1}{z+n}\right)\]
so that $\Psi \in H^\infty(\mathbb{D})$, and note that $\mathcal{L}_{1,0}(\psi\xi)=\Psi$ almost everywhere in $[0,1]$, so in particular $\int_0^1\Psi(x)dx=\int_0^1\psi(x)\xi(x)dx=0$. It follows that $\mathcal{L}_{1,0}^n\Psi=\mathcal{N}_{1,0}^n\Psi$ for every $n \geq 1$ and hence the sum $\chi:=\sum_{n=0}^\infty \xi^{-1}(\mathcal{L}_{1,0}^n\Psi)$ defines an element of $H^\infty(\mathbb{D})$. Define $\hat\psi:=\psi + \chi - \chi \circ T \in L^1([0,1])$ and observe that $\mathcal{L}_{1,0}(\hat\psi\xi)=\Psi + \mathcal{L}_{1,0}(\chi\xi) -\chi\xi= 0$ in $L^1([0,1])$. For each $n\geq 1$ we have
\[\left(\sum_{k=0}^{n-1}\psi \circ T^k\right)^2-\left(\sum_{k=0}^{n-1}\hat\psi\circ T^k\right)^2=2\left(\chi \circ T^n -\chi\right)\left(\sum_{k=0}^{n-1}\hat\psi\circ T^k\right) + \left(\chi \circ T^n -\chi\right)^2\]
in $L^1$. By an argument similar to \eqref{pupihed} we have $\frac{1}{n}\sum_{k=0}^{n-1}\hat\psi \circ T^k \to 0$ in $L^1([0,1])$, and since $\chi \circ T^N - \chi$ is bounded on $[0,1]$ independently of $n$ we may easily derive
\begin{equation}\label{pupihed2}\lambda_{\omega\omega}(1,0)=\lim_{n \to \infty}\frac{1}{n}\int_0^1 \left(\sum_{k=0}^{n-1}\hat\psi(T^kx)\right)^2\xi(x)dx.\end{equation}
Now, using Proposition \ref{TheOne}(ii) and the identity $\mathcal{L}_{1,0}\xi=\xi$, for $k>j \geq 0$ we have
\begin{align*}\int_0^1\hat\psi(T^kx)\hat\psi(T^jx)\xi(x)dx&=\int_0^1\mathcal{L}_{1,0}^k\left(\hat\psi\left(T^{k}x\right)\hat\psi(T^jx)\xi(x)\right)dx\\
&=\int_0^1\hat\psi(x)\mathcal{L}_{1,0}^{k-j}(\hat\psi\xi)(x)dx=0\end{align*}
since $\mathcal{L}_{1,0}(\hat\psi\xi)=0$. The off-diagonal terms of the squared sum in \eqref{pupihed2} thus vanish, leaving
\[\lambda_{\omega\omega}(1,0)=\lim_{n \to \infty}\frac{1}{n}\int_0^1 \left(\sum_{k=0}^{n-1}\hat\psi(T^kx)^2\right)\xi(x)dx=\int_0^1 \hat\psi(x)^2\xi(x)dx.\]
If this quantity were zero then we would have $\psi=\chi \circ T - \chi$ almost everywhere by the definition of $\hat\psi$. Since $c$ is bounded and the logarithm function is not, the function $\psi$ is not essentially bounded; but $\chi$ is bounded, so the equation $\psi=\chi \circ T - \chi$ a.e. is impossible and we conclude that $\lambda_{\omega\omega}(1,0)>0$ as claimed.
\end{proof}

\section{Part II: the Dirichlet series}\label{Thedir}
This short section comprises the following result:
\begin{proposition}\label{never-on-a-tuesday}
For each integer $p \geq 0$, the Dirichlet series
\[D_p(s):=\sum_{n=1}^\infty \frac{1}{n^{2s}}\sum_{(u,v) \in \Omega_n\setminus\Omega_{n-1}} \left(C(u/v)-\mu\log v\right)^p\]
is absolutely convergent in the half-plane $\Re(s)>1$. For each odd $p$, $D_p(s)$ is holomorphic on the line $\Re(s)=1$ with the possible exception of a pole at $s=1$ of order not greater than $(p+1)/2$. For each even $p$ there exist a holomorphic function $R$ defined in the half-plane $\Re(s) \geq 1$ and complex numbers $a_1,\ldots,a_{p/2}$ such that
\[D_p(s) = \frac{p!\sigma^{p}\mathfrak{f}_0}{2^p\mathfrak{h}(T)(s-1)^{1+\frac{p}{2}}}+\sum_{k=1}^{\frac{p}{2}} \frac{a_k}{(s-1)^k} + R(s)\]
when $\Re(s)>1$, where $\mathfrak{f}_0:=(\mathcal{F}_{1,0}\mathcal{P}_{1,0}\mathbf{1})(0)>0$ and $\sigma^2:=2\lambda_{\omega\omega}(1,0)/\mathfrak{h}(T)>0$.
\end{proposition}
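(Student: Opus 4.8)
The plan is to express the inner sum over $(u,v)\in\Omega_n\setminus\Omega_{n-1}$ as a sum over compositions of inverse branches of $T$, rewrite that as a value of an iterated transfer operator, and then use the decomposition $\mathcal{L}_{s,\omega}=\lambda(s,\omega)\mathcal{P}_{s,\omega}+\mathcal{N}_{s,\omega}$ from Proposition \ref{TheOne}(iii) together with the analysis of $\rho(\mathcal{L}_{s,0})$ in Proposition \ref{they-said-i-got-too-much-flow} to locate the singularities. Concretely, a coprime pair $(u,v)$ with $1\le u<v$ corresponds to the rational $u/v=[a_1,\ldots,a_k]$ with $a_k\ge 2$, and the denominator $v$ is (up to a factor controlled by the branch) $1/(h_{a_1}\circ\cdots\circ h_{a_k})'(0)$-ish; summing $\exp(\omega(C(u/v)-\mu\log v))$ over all such pairs reproduces $\sum_{k\ge1}(\mathcal{F}_{s,\omega}\mathcal{L}_{s,\omega}^{\,k-1}\mathbf{1})(0)$ or a close variant, which telescopes to $\bigl(\mathcal{F}_{s,\omega}(\mathrm{Id}-\mathcal{L}_{s,\omega})^{-1}\mathbf{1}\bigr)(0)$. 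Differentiating $p$ times in $\omega$ at $\omega=0$ converts the generating identity into $D_p(s)$, so that $D_p(s)=\frac{\partial^p}{\partial\omega^p}\big|_{\omega=0}\bigl(\mathcal{F}_{s,\omega}(\mathrm{Id}-\mathcal{L}_{s,\omega})^{-1}\mathbf{1}\bigr)(0)$ for $\Re(s)>1$, the region where $\rho(\mathcal{L}_{s,0})<1$ guarantees convergence. (A technical point I will need: presumably an earlier or parallel section supplies the precise branch-counting identity tying $v$ to the contraction ratio; I would invoke it here rather than redo it.)

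Next I would substitute the spectral decomposition. On the set $\mathcal{V}$ of Proposition \ref{TheOne}(iii), $(\mathrm{Id}-\mathcal{L}_{s,\omega})^{-1}=\frac{1}{1-\lambda(s,\omega)}\mathcal{P}_{s,\omega}+(\mathrm{Id}-\mathcal{N}_{s,\omega})^{-1}(\mathrm{Id}-\mathcal{P}_{s,\omega})$, and the second summand is holomorphic near $(1,0)$ since $\rho(\mathcal{N}_{s,\omega})<1$ there. Combined with Proposition \ref{they-said-i-got-too-much-flow} (which rules out unit eigenvalues elsewhere on $\Re(s)=1$) and a compactness/continuity argument to handle $\Re(s)\ge1$ away from a neighbourhood of $1$, this shows $D_p$ extends holomorphically to $\Re(s)\ge1$ except for a possible singularity at $s=1$ coming solely from the factor $1/(1-\lambda(s,\omega))$. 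So the whole problem localises to understanding $\frac{\partial^p}{\partial\omega^p}\big|_{\omega=0}\frac{g(s,\omega)}{1-\lambda(s,\omega)}$ near $s=1$, where $g(s,\omega):=(\mathcal{F}_{s,\omega}\mathcal{P}_{s,\omega}\mathbf{1})(0)$ is holomorphic with $g(1,0)=\mathfrak{f}_0>0$.

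The core computation is then a Taylor expansion of $1-\lambda(s,\omega)$ about $(1,0)$ using Proposition \ref{a-owl}: $\lambda_s(1,0)=-\mathfrak{h}(T)<0$, $\lambda_\omega(1,0)=0$, and $\lambda_{\omega\omega}(1,0)=\tfrac12\sigma^2\mathfrak{h}(T)>0$. Thus $1-\lambda(s,\omega)=\mathfrak{h}(T)(s-1)-\tfrac12\lambda_{\omega\omega}(1,0)\omega^2+(\text{higher order})=\mathfrak{h}(T)\bigl((s-1)-\tfrac{\sigma^2}{4}\omega^2+\cdots\bigr)$. Writing $u:=s-1$, the leading behaviour of $g/(1-\lambda)$ as a function of $(\omega,u)$ is $\frac{\mathfrak{f}_0}{\mathfrak{h}(T)}\cdot\frac{1}{u-\frac{\sigma^2}{4}\omega^2}$, and $\frac{\partial^p}{\partial\omega^p}\big|_{\omega=0}\frac{1}{u-\frac{\sigma^2}{4}\omega^2}$ is computed by expanding the geometric series $\sum_{j\ge0}(\sigma^2/4)^j\omega^{2j}u^{-1-j}$: the $\omega^p$-coefficient is $0$ when $p$ is odd and $(\sigma^2/4)^{p/2}u^{-1-p/2}$ when $p$ is even, giving $p!$ times that, i.e. $\frac{p!\,\mathfrak{f}_0\,\sigma^p}{2^p\,\mathfrak{h}(T)}\,(s-1)^{-1-p/2}$ — exactly the claimed leading term. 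For odd $p$ the same expansion (now including the genuinely holomorphic corrections to $\lambda$ and $g$) shows every negative power of $(s-1)$ that survives has order at most $(p+1)/2$, since each extra factor of $(s-1)^{-1}$ must be "paid for" by two powers of $\omega$ and we have only $p$ of them; for even $p$ the subleading terms assemble into the finite sum $\sum_{k=1}^{p/2}a_k(s-1)^{-k}$ plus a holomorphic remainder $R$.

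I expect the main obstacle to be bookkeeping the higher-order Taylor corrections rigorously: one must argue that the contributions of $\lambda_{s\omega}$, $\lambda_{\omega\omega\omega}$, $g_\omega$, etc., when fed through $\partial_\omega^p$ and Faà di Bruno's formula, never produce a pole of order exceeding the stated bound and never disturb the leading coefficient. The clean way is to work with the single-variable function $u\mapsto$ (germ at $u=0$ of the meromorphic-in-$u$, analytic-in-$\omega$ object), factor $1-\lambda(s,\omega)=\mathfrak{h}(T)\bigl(u-\tfrac{\sigma^2}{4}\omega^2\Phi(u,\omega)\bigr)$ with $\Phi$ analytic and $\Phi(0,0)=1$, and track the order in $(s-1)$ by a degree count in the geometric expansion; the parity statement for odd $p$ then falls out because the full expansion is an odd function of $\omega$ times something, or more carefully because any monomial $\omega^a u^{-b}$ arising needs $a\ge 2(b-1)$, forcing $b\le(p+1)/2$ after extracting $\omega^p$. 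Positivity of $\mathfrak{f}_0=(\mathcal{F}_{1,0}\mathcal{P}_{1,0}\mathbf 1)(0)$ follows since $\mathcal{P}_{1,0}\mathbf1=\xi$ by Proposition \ref{TheOne}(iii) and $\mathcal F_{1,0}$ has strictly positive kernel, while $\sigma^2>0$ is Proposition \ref{a-owl}; these make the leading coefficient genuinely nonzero.
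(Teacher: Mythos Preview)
Your proposal is correct and follows essentially the same route as the paper: establish the identity $D_p(s)=\partial_\omega^p\big|_{\omega=0}\sum_{k\ge 0}(\mathcal{F}_{s,\omega}\mathcal{L}_{s,\omega}^k\mathbf{1})(0)$ via the inverse-branch parametrisation of coprime pairs, use Proposition~\ref{they-said-i-got-too-much-flow} to push holomorphy to $\Re(s)\ge 1$ away from $s=1$, insert the spectral splitting from Proposition~\ref{TheOne}(iii), and reduce everything to the $\omega$-derivatives of $(1-\lambda(s,\omega))^{-1}$ at $\omega=0$.

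The only substantive difference is how you organise that last computation. The paper applies Fa\`a di Bruno's formula to $\partial_\omega^n(1-\lambda)^{-1}$ and then, for each multi-index $(k_1,\ldots,k_n)$, bounds the pole order at $s=1$ by combining the simple zero of $1-\lambda(s,0)$ with the zero of $\lambda_\omega(s,0)$ at $s=1$; this yields the sharp bound $1+\lfloor n/2\rfloor$ and isolates the term $k_2=n/2$ as the unique source of the top-order pole when $n$ is even. Your geometric-series degree count is the same combinatorics in disguise and gives the same bound $b\le 1+p/2$; it is arguably slicker for the order estimate, though the Fa\`a di Bruno bookkeeping makes the leading coefficient fall out more mechanically. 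One caveat: your proposed factorisation $1-\lambda(s,\omega)=\mathfrak{h}(T)\bigl(u-\tfrac{\sigma^2}{4}\omega^2\Phi(u,\omega)\bigr)$ with $\Phi$ analytic is not quite right, since at $\omega=0$ the left side is $1-\lambda(s,0)=\mathfrak{h}(T)u+O(u^2)$ and the $O(u^2)$ term is not divisible by $\omega^2$. The fix is exactly what you sketch afterwards: write $1-\lambda(s,\omega)=(1-\lambda(s,0))\bigl(1-B(s,\omega)/A(s)\bigr)$ with $A(s)=1-\lambda(s,0)$ and $B(s,\omega)=\lambda(s,\omega)-\lambda(s,0)$, and run the degree count on the geometric series in $B/A$; then $\lambda_\omega(1,0)=0$ makes the $\ell=1$ term of $B/A$ holomorphic in $s$, and the rest goes through. (Incidentally, the paper derives the branch-counting identity in the proof itself rather than importing it from elsewhere.)
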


\begin{proof}

For each $n \geq 0$ let us define $\Xi_n$ to be the set of all $u/v \in(0,1)$ which have a continued fraction expansion $u/v=[a_1,\ldots,a_{n+1}]$ with $a_{n+1}\geq 2$; equivalently, if we write $h_n(x):=1/(x+n)$ then $u/v \in \Xi_n$ if and only if $u/v=(h_{a_1} \circ h_{a_2}\circ\cdots\circ h_{a_{n+1}})(0)$ with $a_{n+1}\geq 2$. This composition of functions $h_k$ is itself a function of the form $x \mapsto (\alpha x + \beta)/(\gamma x + \delta)$ where $|\alpha\delta-\beta\gamma|=1$ and $\alpha,\beta,\gamma,\delta \in \mathbb{Z}$, so its derivative at $x=0$ is equal to $1/\delta^2 = 1/v^2$. Since each $u/v$ has a unique continued fraction expansion such that $a_{n+1}\geq 2$, using the definition of $\mathcal{F}_{s,\omega}$ and $\mathcal{L}_{s,\omega}$ we may write
\begin{align*}\left(\mathcal{F}_{s,\omega}\mathcal{L}_{s,\omega}^n\mathbf{1}\right)(0)&=\sum_{\substack{a_1,\ldots,a_n \geq 1\\a_{n+1}\geq 2}} e^{\omega\sum_{k=1}^{n+1}c(a_k) } \left|\left(h_{a_1} \circ \cdots \circ h_{a_{n+1}}\right)'(0)\right|^{s+\frac{1}{2}\mu\omega}  \\
&=\sum_{(u,v) \in \Xi_n} \frac{1}{v^{2s}}\exp(\omega(C(u/v)-\mu\log v)) \end{align*}
for each $n \geq 0$. Notice that when $\omega$ and $s$ are real this is a series of non-negative real numbers whose total is bounded by $|\mathcal{F}_{s,\omega}|_\infty |\mathcal{L}_{s,\omega}^n|_\infty$. Taking the sum over all integers $n \geq 0$ we obtain
\begin{equation}\label{had-to-do-an-overnight-redress}\sum_{n=1}^\infty \frac{1}{n^{2s}}\sum_{(u,v) \in \Omega_n\setminus \Omega_{n-1}} \exp\left(\omega\left(C(u/v)-\mu\log v\right)\right) = \sum_{k=0}^\infty \left(\mathcal{F}_{s,\omega}  \mathcal{L}^k_{s,\omega}\mathbf{1}\right)(0).\end{equation}
The second of these infinite series is well-defined for all $(s,\omega)$ such that $\rho(\mathcal{L}_{s,\omega})<1$. 
The first series is a convergent series of non-negative real numbers when $(s,\omega)$ are real and $\rho(\mathcal{L}_{s,\omega})<1$, and it is not difficult to see that this implies the absolute convergence of the series when $\rho\left(\mathcal{L}_{\Re(s),\Re(\omega)}\right)<1$. By Proposition \ref{they-said-i-got-too-much-flow} both series converge absolutely and \eqref{had-to-do-an-overnight-redress} is a valid equation when $\Re(s)>1$ and $\omega$ is sufficiently close to zero. When these conditions are met we may differentiate both sides with respect to $\omega$ to obtain for each integer $p \geq 0$
\begin{equation}\label{does-jude-law-have-a-vagina}\sum_{n=1}^\infty \frac{1}{n^{2s}}\sum_{(u,v) \in \Omega_n\setminus \Omega_{n-1}} \left(C\left(\frac{u}{v}\right)-\mu\log v\right)^p = \frac{\partial^p}{\partial\omega^p}\left(\sum_{k=0}^\infty \left(\mathcal{F}_{(\cdot,\cdot)}  \mathcal{L}^k_{(\cdot,\cdot)}\mathbf{1}\right)(0)\right)(s,0)\end{equation}
with the former series being absolutely convergent when $\Re(s)>1$. In view of Proposition \ref{they-said-i-got-too-much-flow} the second series in \eqref{had-to-do-an-overnight-redress} also convergent when $\Re(s)=1$, $s \neq 1$ and $\omega$ is sufficiently small, and so the right-hand side of \eqref{does-jude-law-have-a-vagina} is holomorphic in the region $\Re(s)\geq 1$, $s \neq 1$. In particular the series $D_p(s)$ admits an analytic continuation into that region as required.

The remainder of the proof is concerned with the analysis of $D_p(s)$ close to $s=1$. Let us fix the integer $p \geq 0$. Using Proposition \ref{TheOne}(iii) we may find an open set $\mathcal{V}\subset\mathbb{C}^2$ containing $(1,0)$ in which $\mathcal{L}_{s,\omega} = \lambda(s,\omega)\mathcal{P}_{s,\omega} \oplus \mathcal{N}_{s,\omega}$ where $\lambda(1,0)=1$, $\mathcal{P}_{s,\omega}$ is a projection of rank one and the spectral radius of $\mathcal{N}_{s,\omega}$ is strictly less than $1$. When $(s,\omega) \in \mathcal{V}$, $\Re(s)>1$ and $\omega$ is sufficiently small we have $|\lambda(s,\omega)|<1$ by Proposition \ref{they-said-i-got-too-much-flow} and in this case
\begin{align*} \sum_{k=0}^\infty \left(\mathcal{F}_{s,\omega}  \mathcal{L}^k_{s,\omega}\mathbf{1}\right)(0) &= \sum_{k=0}^\infty \lambda(s,\omega)^k\left(\mathcal{F}_{s,\omega}  \mathcal{P}_{s,\omega}\mathbf{1}\right)(0) + \left(\mathcal{F}_{s,\omega}  \mathcal{N}_{s,\omega}^k\mathbf{1}\right)(0)\\
 &=\frac{1}{1-\lambda(s,\omega)} (\mathcal{F}_{s,\omega}\mathcal{P}_{s,\omega}\mathbf{1})(0) + N(s,\omega),\end{align*}
say, where $N$ is holomorphic throughout $\mathcal{V}$. For $(s,0)\in\mathcal{V}$ with $\Re(s)>1$ the left-hand side of \eqref{does-jude-law-have-a-vagina} is therefore equal to
\[\frac{\partial^p}{\partial\omega^p}\left( \frac{1}{1-\lambda}\left(\mathcal{F}_{(\cdot,\cdot)}  \mathcal{P}_{(\cdot,\cdot)}\mathbf{1}\right)(0)\right)(s,0) +R_1(s)\]
 where $R_1$ extends holomorphically to a neighbourhood of $s=1$. Let us expand the first expression as
\[\sum_{n=0}^p\left(\begin{array}{c}n\\p\end{array}\right)\left(\frac{\partial^n}{\partial\omega^n} \left(\frac{1}{1-\lambda}\right)(s,0)\right)\left(\frac{\partial^{p-n}}{d\omega^{p-n}} \left((\mathcal{F}_{(\cdot,\cdot)}\mathcal{P}_{(\cdot,\cdot)}\mathbf{1})(0)\right)(s,0)\right).\]
Since $(\mathcal{F}_{s,\omega}\mathcal{P}_{s,\omega}\mathbf{1})(0)$ is holomorphic throughout $\mathcal{V}$ its partial derivatives are holomorphic also, and hence poles may only enter the above expression via the partial derivatives of $1/(1-\lambda)$. We will show that the term corresponding to $n=p$ contributes a pole of order $\lfloor 1+p/2\rfloor$, and the remaining terms can only contribute poles of lower order. Using Fa\`a di Bruno's formula (see e.g. \cite{Rio}),
\begin{equation}\label{fa}\frac{\partial^n}{\partial\omega^n} \left(\frac{1}{1-\lambda}\right)(s,0) = \sum_{\substack{k_1+2k_2+\ldots+nk_n=n\\k_1,\ldots,k_n \geq 0}}\frac{n!(\sum_{\ell=1}^n k_\ell)!}{(1-\lambda(s,0))^{1+\sum_{\ell=1}^n k_\ell}}\prod_{\ell=1}^n  \frac{\lambda_\omega^{(\ell)}(s,0)^{k_\ell}}{k_\ell! (\ell!)^{k_\ell}}\end{equation}
We consider separately the contribution from each summand.

Recall from Proposition \ref{a-owl} that $\lambda_\omega(s,0)$ has a zero at $s=1$ with degree equal to some positive integer $d$, whereas $1-\lambda_(s,0)$ has a simple zero at $s=1$. If $k_1 \geq 2$ then $\sum_{\ell=2}^n k_\ell \leq \frac{1}{2}(n-k_1)\leq n/2 -1$ and so the function $\lambda_\omega(s,0)^{k_1} (1-\lambda(s,0))^{-1-\sum_\ell k_\ell}$ has a pole of order at most $n/2$ at $s=1$. If $k_i \geq 1$ for some $i \geq 3$, then $\sum_{\ell=2}^n k_\ell \leq \frac{1}{2}(n-k_i)  \leq (n-1)/2$ and hence $\lambda_\omega(s,0)^{k_1}(1-\lambda(s,0))^{-1-\sum_\ell k_\ell}$ has a pole of order not greater than $(n+1)/2$. The only remaining case is that in which $k_2 = \lfloor n/2 \rfloor$ and $k_1 = n -2k_2$, and in this case the order of the pole of $\lambda_\omega(s,0)^{k_1}(1-\lambda(s,0))^{-1-\sum_\ell k_\ell}$ at $s=1$ is precisely $1+\lfloor n/2\rfloor+(1-d)k_1$. It follows that when $n$ is odd the expression \eqref{fa} has a pole at $s=1$ of order not greater than $(n+1)/2$, and when $n$ is even it has a pole of order precisely $1+n/2$ at $s=1$ which arises solely from the summand with $k_2=n/2$ and $k_1=0$. Summing \eqref{fa} over $n$ in the range $0$ to $p$, it follows that when $p$ is odd $D_p$ has a pole of order at most $(p+1)/2$ at $s=1$, which completes the proof of the proposition for odd $p$. In the case where $p$ is even the above analysis shows that we may write 
\[\frac{\partial^p}{\partial\omega^p}\left(\frac{1}{1-\lambda} (\mathcal{F}_{(\cdot,\cdot)}\mathcal{P}_{(\cdot,\cdot)}\mathbf{1})(0)\right)(s,0) =\frac{p! \lambda_{\omega\omega}(s,0)^{p/2}}{2^{p/2}(1-\lambda(s,0))^{1+p/2}}(\mathcal{F}_{s,0}\mathcal{P}_{s,0}\mathbf{1})(0) + R_2(s)\]
for all $s \neq 1$ such that $(s,0)\in\mathcal{V}$, where $R_2$ has a pole at $s=1$ of order not greater than $p/2$ and otherwise is holomorphic at $s$ for $(s,0)\in\mathcal{V}$. Let $\mathfrak{f}_0:=(\mathcal{F}_{1,0}\mathcal{P}_{1,0}\mathbf{1})(0)$, which is positive since $\mathcal{P}_{1,0}\mathbf{1}=\xi$ and $\mathcal{F}_{1,0}$ preserves the set of functions which are positive on $[0,1]$. Since $\lambda(1,0)=1$, $\lambda_{\omega\omega}(1,0)=\frac{1}{2}\sigma^2\mathfrak{h}(T)$, and $|\lambda(s,0)|<1$ when $\Re(s) \geq 1$ and $s \neq 1$, the functions $A(s):=(1-s)/(1-\lambda(s,0))$, $B(s):=(\lambda_{\omega\omega}(s,0)-\frac{1}{2}\sigma^2\mathfrak{h}(T))/(1-s)$ and $C(s):=((\mathcal{F}_{s,0}\mathcal{P}_{s,0}(\mathbf{1})(0)-\mathfrak{f}_0)/(1-s)$ are holomorphic in the region $\Re(s) \geq 1$. Substituting $(1-\lambda(s,0))^{-1}=(1-s)^{-1}A(s)$, $\lambda_{\omega\omega}(s,0) = \frac{1}{2}\sigma^2\mathfrak{h}(T)+(1-s)B(s)$ and $(\mathcal{F}_{s,0}\mathcal{P}_{s,0}\mathbf{1})(0)=\mathfrak{f}_0+(1-s)C(s)$ into the previous expression we obtain
\[\frac{\partial^p}{\partial^p\omega}\left(\frac{1}{1-\lambda} (\mathcal{F}_{(\cdot,\cdot)}\mathcal{P}_{(\cdot,\cdot)}\mathbf{1})(0)\right)(s,0) =\frac{p!\sigma^p\mathfrak{h}(T)^{p/2}A(s)^{1+p/2}\mathfrak{f}_0}{2^{p}(1-s)^{1+p/2}} + R_3(s)\]
for all $s$ sufficiently close to $1$, where $R_3$ extends holomorphically to the region $\Re(s) \geq 1$ with the possible exception of a pole at $s=1$ with order not greater than $p/2$. In view of Proposition \ref{a-owl} we have $A(1)=\lambda_s(1,0)^{-1}=-1/\mathfrak{h}(T)$, and so this may be further rewritten as 
\[\frac{\partial^p}{\partial\omega^p}\left(\frac{1}{1-\lambda} (\mathcal{F}_{(\cdot,\cdot)}\mathcal{P}_{(\cdot,\cdot)}\mathbf{1})(0)\right)(s,0) =\frac{p!\sigma^p\mathfrak{f}_0}{2^{p}\mathfrak{h}(T)(s-1)^{1+p/2}} + R_4(s)\]
where $R_4$ has the same properties as $R_3$. The proof is complete.
 \end{proof}

\section{Part III: the Tauberian argument}\label{Thetau}
In this section we apply the properties of the Dirichlet series $D_p(s)$ established in Proposition \ref{never-on-a-tuesday} to prove Theorem \ref{qfwfq}. We give the full details of the proof only in the case of the distribution of costs on $\Omega_n$; the case of the distribution on $\tilde{\Omega}_n$ can be handled by simple modifications which are indicated at the end of the section. By the classical method of moments, to prove Theorem \ref{qfwfq} it is sufficient to prove that for every $p \geq 1$ the sequence of $p^{\mathrm{th}}$ moments
\[\frac{1}{\#\Omega_n}\sum_{(u,v) \in \Omega_n} \left(\frac{C(u/v)-\mu\log n}{\sqrt{\log n}}\right)^p\]
converges to the limit
\[\frac{1}{\sigma\sqrt{2\pi}}\int_{-\infty}^\infty t^pe^{-\frac{t^2}{2\sigma^2}}dt=\Bigg\{\begin{array}{cl}0& \text{if }p\text{ is odd}\\
\sigma^p (p-1)!!& \text{if }p\text{ is even,}\end{array}\]
see for example \cite[\S30]{Bill}. We will begin by calculating the asymptotics of the related sequence $\sum_{(u,v)\in\Omega_n} (C(u/v)-\mu\log v)^p$ as $n \to \infty$ for each $p \geq 0$, and then use this to derive the corresponding asymptotics for the sequences of moments. We require the following Tauberian theorem due to H. Delange \cite[Th. III]{Delange}:
\begin{theorem}[Delange]\label{you-go-tell-dr-dre-that-man-will-fuck-you-up}
Let $(a_n)_{n \geq 1}$ be a sequence of non-negative real numbers. Suppose that for all $s$ in the half-plane $\Re(s)>a$ the series $D(s):= \sum_{n=1}^\infty a_nn^{-s}$ is absolutely convergent and satisfies
\[D(s) = \frac{g(s)}{(s-a)^\omega}+\sum_{j=1}^m \frac{g_j(s)}{(s-a)^{\lambda_j}} + h(s)\]
 where $\omega>\lambda_j>0$ for every $j$, the functions $g_i$, $g$ and $h$ are holomorphic in the region $\Re(s) \geq a$, and $g(a)>0$. Then
\begin{equation}\label{come-on-dickhead}\lim_{N \to \infty}\frac{1}{N^a(\log N)^{\omega-1}}\sum_{n=1}^N a_n = \frac{g(a)}{a\Gamma(\omega)}.\end{equation}
\end{theorem}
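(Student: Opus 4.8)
The plan is to prove this by the complex Tauberian method going back to Wiener and Ikehara. Write $A(x) := \sum_{n \le x} a_n$, which is non-decreasing since the $a_n$ are non-negative; by Abel summation $D(s) = s\int_1^\infty A(x)x^{-s-1}\,dx$ for $\Re(s) > a$, and the goal is to show $A(N) \sim \frac{g(a)}{a\Gamma(\omega)}N^a(\log N)^{\omega-1}$. Passing to additive variables through $x = e^t$ and setting $B(t) := A(e^t)e^{-at} \ge 0$ turns the Dirichlet series, after division by $s = a+z$, into a Laplace transform: $\int_0^\infty B(t)e^{-zt}\,dt = D(a+z)/(a+z)$ for $\Re(z) > 0$. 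The model case $D(s) = g(a)(s-a)^{-\omega}$, together with the identity $\int_0^\infty t^{\omega-1}e^{-zt}\,dt = \Gamma(\omega)z^{-\omega}$, predicts $B(t) \sim \frac{g(a)}{a\Gamma(\omega)}t^{\omega-1}$; by the same heuristic the secondary singularities $g_j(s)(s-a)^{-\lambda_j}$ with $0 < \lambda_j < \omega$ contribute only terms of order $t^{\lambda_j-1} = o(t^{\omega-1})$, and the holomorphic remainder $h$, having no singularity at $s = a$, affects only lower order.

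The core of the argument is the Tauberian passage from the boundary behaviour of the transform back to the asymptotics of $B$. The hypotheses ensure that $D(a+z)/(a+z) - g(a)/(az^\omega)$ extends continuously to the closed half-plane $\Re(z) \ge 0$ with at worst a singularity milder than $z^{-\omega}$ at the origin. I would then run the Ikehara-type estimate: convolve $B$, tempered by a factor $e^{-\epsilon t}$, against a smoothing kernel whose Fourier transform is supported in a bounded interval, so that the smoothed quantity is expressed, by Fourier inversion, as an integral of $D(a+\epsilon+iu)/(a+\epsilon+iu)$ over a bounded segment of a vertical line; on that segment the stated holomorphy, together with an explicit comparison near $s = a$ against the series $\sum_n (\log n)^{\omega-1}n^{-s}$ whose partial sums are known, supplies the required uniform control. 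Letting $\epsilon \downarrow 0$ and using the monotonicity of $A$ to sandwich $B(t)$ between smoothed averages over slightly displaced windows then pins down $\lim_{t\to\infty}B(t)/t^{\omega-1} = \frac{g(a)}{a\Gamma(\omega)}$, which is the assertion. An essentially equivalent route is the Selberg--Delange contour argument: truncate Perron's formula for $A(x)$, deform onto a Hankel-type contour hugging the singularity at $s = a$, evaluate the main term via $\frac{1}{2\pi i}\int_{\mathcal{H}}e^{ws}s^{-\omega}\,ds = w^{\omega-1}/\Gamma(\omega)$, and bound the vertical portions using only continuity up to $\Re(s) = a$.

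The main obstacle, and the reason a genuine Tauberian theorem is needed rather than a bare application of Perron's formula together with residues, is that $D$ is assumed holomorphic only on the closed half-plane $\Re(s) \ge a$ away from the single point $s = a$, with neither analytic continuation across the line nor any a priori vertical growth bound; one therefore cannot simply push contours to the left and collect residues. The compactly supported smoothing kernel is exactly what sidesteps the missing growth information, by confining the relevant integral to a bounded portion of the line. The genuine technical crux is then the passage from an $O$-bound to an asymptotic: the smoothed estimate yields only one-sided control of $B(t)$, and it is the non-negativity of the $a_n$ --- equivalently the monotonicity of $A$ --- that upgrades this to the two-sided statement, through the classical Fej\'er-kernel sandwiching; this is the sole place at which the positivity hypothesis is used. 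Once the leading term is secured, rerunning the same argument with $\omega$ replaced in turn by each $\lambda_j$ confirms that the secondary singularities contribute only lower-order terms, and the proof is complete.
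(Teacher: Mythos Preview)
The paper does not prove this theorem at all: it is quoted as an external result, introduced with ``We require the following Tauberian theorem due to H.~Delange \cite[Th.~III]{Delange}'' and then applied as a black box. There is therefore no ``paper's own proof'' against which to compare your attempt.

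That said, your sketch is a reasonable outline of how such a theorem is proved, and in spirit it is close to Delange's original argument. A few remarks on the sketch itself. First, the Wiener--Ikehara mechanism you describe is calibrated to a simple pole; when the singularity is $(s-a)^{-\omega}$ with $\omega>1$ the comparison function $B(t)$ grows like $t^{\omega-1}$ rather than tending to a constant, so the Fej\'er-kernel sandwiching has to be done against $t^{\omega-1}$ rather than against $1$, and the monotonicity input becomes the statement that $A(x)/x^a$ is, up to a slowly varying factor, eventually monotone in the right sense. This is where Delange's proof does real work. Second, your alternative ``Hankel contour'' route is not available under the stated hypotheses: you yourself note that no continuation across $\Re(s)=a$ and no vertical growth bound is assumed, and the Hankel integral genuinely needs one or the other. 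Delange's argument stays on the closed half-plane and uses only continuity up to the line, which is why the smoothing route is the right one. Third, the treatment of the lower-order terms $g_j(s)(s-a)^{-\lambda_j}$ is not quite ``rerun the argument with $\omega$ replaced by $\lambda_j$'': those terms need not have positive coefficients, so the Tauberian step does not apply to them directly; rather, one subtracts off an explicit Dirichlet series with the same leading singular behaviour and shows the difference contributes $o(N^a(\log N)^{\omega-1})$.

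In short: there is nothing in the paper to compare to, and your plan is broadly correct but would need the refinements above to become a proof.
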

{\it{Remark.}} Delange's statement includes some additional parameters $\mu_j$ all of which we set to $0$ in the above statement. Since we will apply the result to series of the form $\sum_{n=1}^\infty a_nn^{-2s}$ we obtain an additional factor $2^{\omega-1}$ in the numerator of the limit in \eqref{come-on-dickhead} which arises from the change of variable.

Let us write $D^{(k)}_p$ for the $k^{\mathrm{th}}$ derivative of $D_p$ with respect to $s$. In the case $p=0$ the combination of Proposition \ref{never-on-a-tuesday} with Theorem \ref{you-go-tell-dr-dre-that-man-will-fuck-you-up} yields $\lim_{n \to \infty} \#\Omega_n/n^2=\mathfrak{f}_0/\mathfrak{h}(T)$. For even $p>0$ the same approach yields
\[\lim_{n \to \infty} \frac{1}{n^2(\log n)^{p/2}} \sum_{(u,v)\in\Omega_n} \left(C(u/v) - \mu \log v\right)^p = \frac{p!\sigma^p\mathfrak{f}_0}{2^{p/2}(p/2)!\mathfrak{h}(T)}\]
and therefore
\begin{equation}\label{christmas-shitter}\lim_{n \to \infty} \frac{1}{\#\Omega_n (\log n)^{p/2}} \sum_{(u,v)\in\Omega_n} \left(C(u/v) - \mu \log v\right)^p = \frac{p!\sigma^p}{2^{p/2}(p/2)!}=(p-1)!! \sigma^p.\end{equation}
When $p \geq 1$ is odd the coefficients of the series $D_p$ fail to be non-negative and a more delicate argument is required, which comprises much of the remainder of this section. We adopt an argument due to H.-K. Hwang and S. Janson \cite{HJ,HJcorr}. Let us fix odd $p \geq 1$ and define auxiliary Dirichlet series,
\begin{eqnarray}
\notag\mathcal{D}_1(s)&:=&\sum_{n=1}^\infty \frac{1}{n^{2s}}\sum_{(u,v) \in \Omega_n\setminus \Omega_{n-1}} \left(\left(C\left(\frac{u}{v}\right) - \mu\log v\right)^{2p}+(\log v)^p\right),\\
\notag\mathcal{D}_2(s)&:=&\sum_{n=1}^\infty \frac{1}{n^{2s}}\sum_{(u,v)\in\Omega_n\setminus \Omega_{n-1}} \left(\left(C\left(\frac{u}{v}\right) -  \mu\log v\right)^{p}+(\log v)^{\frac{p}{2}}\right)^2,\\
\label{i-can-smell-your-fucking-hands}\mathcal{D}_3(s)&:=&\sum_{n=1}^\infty \frac{1}{n^{2s}}\sum_{(u,v)\in\Omega_n\setminus \Omega_{n-1}} \left(\log v\right)^{\frac{p}{2}}\left(C\left(\frac{u}{v}\right) -  \mu\log v\right)^{p},\end{eqnarray}
where we note that $\mathcal{D}_3=\frac{1}{2}(\mathcal{D}_2-\mathcal{D}_1)$. The first of these series  is easily seen to satisfy
$\mathcal{D}_1(s)=D_{2p}(s)-\frac{1}{2^p}D_0^{(p)}(s)$, and since $D_{2p}$ and $D_0$ converge absolutely in the region $\Re(s)>1$ the series $\mathcal{D}_1$ converges absolutely in that region also. Since the terms of $\mathcal{D}_3(s)$ are bounded in absolute value by those of $(-2)^{-\lceil p/2\rceil}(D_p^{(\lceil p/2 \rceil)}(\Re(s))+D_0^{\lceil p/2\rceil}(\Re(s)))$ it follows that $\mathcal{D}_3(s)$ converges absolutely for $\Re(s)>1$, and we deduce that $\mathcal{D}_2=\mathcal{D}_1+2\mathcal{D}_3$ has the same property.

Using the properties of $D_{2p}$ and $D_0$ described in Proposition \ref{never-on-a-tuesday} we may write
\[\mathcal{D}_1(s)=D_{2p}(s)-\frac{1}{2^p}D_0^{(p)}(s)=\frac{g(s)}{(s-1)^{1+p}}+f(s)\]
where $g$ and $f$ are holomorphic in the region $\Re(s) \geq 1$ and $g(1)$ is positive (its precise value is unimportant).
Using Theorem \ref{you-go-tell-dr-dre-that-man-will-fuck-you-up} it follows that
\[\lim_{n \to \infty} \frac{1}{n^2(\log n)^p} \sum_{(u,v) \in \Omega_n} \left(\left(C(u/v) - \mu\log v\right)^{2p}+(\log v)^p\right) = \frac{2^p g(1)}{p!}.\]
We will show that $\mathcal{D}_2$ has a singularity at $s=1$ with the same order and leading term as $\mathcal{D}_1$ and hence that its partial sums have identical asymptotic behaviour. To achieve this we shall bound the singularity of the difference $\mathcal{D}_3$. We have
 \[\left(-\frac{1}{2}\right)^{\frac{p+1}{2}}D_p^{\left(\frac{p+1}{2}\right)}(s)=\sum_{n=1}^\infty \frac{1}{n^{2s}}\sum_{(u,v) \in \Omega_n\setminus \Omega_{n-1}} (\log v)^{\frac{p+1}{2}}\left(C\left(\frac{u}{v}\right)-\mu\log v\right)^p\]
for $\Re(s)>1$. Noting that $v=n$ when $(u,v) \in \Omega_n\setminus \Omega_{n-1}$ and using the identity $\int_0^\infty n^{-2t}t^{-\frac{1}{2}}dt=\sqrt{\pi/(2\log n)}$ we may rewrite \eqref{i-can-smell-your-fucking-hands} as
\begin{align*}\mathcal{D}_3(s)&=\sqrt{\frac{2}{\pi}}\sum_{n=1}^\infty \frac{1}{n^{2s}}\sum_{(u,v)\in\Omega_n\setminus \Omega_{n-1}} (\log v)^{\frac{p+1}{2}}\left(C\left(\frac{u}{v}\right)-\mu\log v\right)^p \int_0^\infty \frac{dt}{n^{-2t}\sqrt{t}}\\
&=\frac{(-1)^{\frac{p+1}{2}}}{\sqrt{2^p\pi}}\int_0^\infty \frac{D_p^{((p+1)/2)}(s+t)}{\sqrt{t}}dt.\end{align*}
We have $|D_p^{((p+1)/2)}(s)|=O(2^{-\Re(2s)})$ uniformly in $\Im(s)$ in the limit as $\Re(s) \to \infty$ since $D_p^{((p+1)/2)}$ is a Dirichlet series with zero constant term. It follows that when $D_p^{((p+1)/2)}$ is bounded and holomorphic in a strip of the form $\{s \colon \Re(s)>\alpha \text{ and }\beta <\Im(s) < \gamma\}$ the integral $\int_0^\infty D_p^{((p+1)/2)}(s+t)t^{-1/2}dt$ converges absolutely uniformly in the same strip. We deduce that $\mathcal{D}_3$ is holomorphic in the region $\Re(s) \geq 1$ except for the point $s=1$.

By Proposition \ref{never-on-a-tuesday} $D_p$ has a pole at $s=1$ with order at most $(p+1)/2$, and therefore $D_p^{((p+1)/2)}$ has a pole at $s=1$ of order at most $p+1$. It follows that we may find an open set $U$ of the form $U=\{s \colon \Re(s) >1-\varepsilon, |\Im(s)|<\delta\}$, a bounded holomorphic function $R$ defined on $U$, and complex numbers $a_1,\ldots,a_{\ell}$ such that
\[D_p^{((p+1)/2)}(s)=\sum_{k=1}^{\ell}\frac{a_k}{(s-1)^k} + R(s)\]
for all $s \in U$, where $\ell \leq p+1$. We have $|R(s)|=O(\Re(s)^{-1})$ as $\Re(s) \to \infty$ uniformly throughout $U$  in view of the corresponding bound on $|D_p^{((p+1)/2)}(s)|$ and hence the integral $\int_0^\infty R(s+t)t^{-1/2}dt$ converges absolutely uniformly in this region. Since
\[\int_0^\infty \frac{1}{(s+t-1)^k\sqrt{t}}dt = (s-1)^{\frac{1}{2}-k}\int_0^\infty \frac{du}{(1+u^2)^k} = \frac{\pi(2k-2)!}{2^{2k-1}((k-1)!)^2}(s-1)^{\frac{1}{2}-k}\]
for integers $k \geq 1$ it follows that for $s \in U$ we have
\[\mathcal{D}_3(s)=(-1)^{\frac{p+1}{2}}\sum_{k=1}^{\ell} \frac{\sqrt{\pi}(2k-2)!}{2^{2k-\frac{p}{2}-1}((k-1)!)^2}a_k(s-1)^{\frac{1}{2}-k} + \hat{R}(s)\]
where $\hat{R}$ is another holomorphic function. Taking the Laurent series of $\mathcal{D}_1$ around its pole at $s=1$ it follows that $\mathcal{D}_2=\mathcal{D}_1+2\mathcal{D}_3$ may be written in the form
\[\mathcal{D}_2(s)=\sum_{k=1}^{p+1} \frac{f_k}{(s-1)^k} + \frac{g_k}{(s-1)^{k-\frac{1}{2}}} + r(s)\]
where $f_k$, $g_k$ and $r$ are all holomorphic in the region $\Re(s) \geq 1$, and $f_{p+1}(1)=g(1)$. Applying Theorem \ref{you-go-tell-dr-dre-that-man-will-fuck-you-up} to $\mathcal{D}_2$ we obtain
\[\lim_{n \to \infty} \frac{1}{n^2(\log n)^p} \sum_{(u,v) \in \Omega_n} \left(\left(C(u/v) - \mu\log v\right)^p+(\log v)^{p/2}\right)^2 = \frac{2^p g(1)}{p!}\]
and by subtracting the corresponding expression for the partial sums of $\mathcal{D}_1$
\begin{equation}\label{sweetie-man}\lim_{n \to \infty} \frac{1}{n^2(\log n)^p} \sum_{(u,v) \in \Omega_n} (\log v)^{p/2}\left(C(u/v) - \mu\log v\right)^{p}=0.\end{equation}
We next show that \eqref{sweetie-man} remains valid if the factor $(\log v)^{p/2}$ is removed from inside the summation and a factor $(\log n)^{p/2}$ is removed from the denominator outside the summation. Following \cite{HJ}, for each $t>0$ let us define $\varphi(t):=(\log t)^{-p/2}$ and $\Pi(t):=\sum_{(u,v) \in \Omega_{\lfloor t \rfloor}} \left(\log v \right)^{p/2}\left(C(u/v) - \mu\log v\right)^p$. Using \eqref{sweetie-man} we may find $\tau>2$ such that $|\Pi(t)|< 2t^{2}(\log t)^{p}/p$ for all $t>\tau$ and hence for all $n>\tau$
\begin{align*}\left|\int_2^n \Pi(t)\varphi'(t)dt\right| &\leq \left|\int_2^\tau  \Pi(t)\varphi'(t)dt\right| + \left|\int_\tau^n \Pi(t)\varphi'(t)dt \right|\\
& \leq K+ \int_2^n t(\log t)^{\frac{p}{2}-1} dt<K+n^2(\log n)^{\frac{p}{2}-1},\end{align*}
say. It follows that
\begin{equation}\label{nonce-nuggets}\lim_{n \to \infty} \frac{1}{n^2(\log n)^{p/2}}\int_2^n \Pi(t) \varphi'(t)dt=0.\end{equation}
Noting that $(\log v)^{p/2}(C(u/v)-\mu\log v)=0$ when $v=1$, for each $n \geq 3$ we obtain
\begin{align*}\int_2^{n} \Pi(t)\varphi'(t)dt
 &= \sum_{(u,v) \in \Omega_{n-1}} \left(\log v \right)^{\frac{p}{2}}\left(C\left(\frac{u}{v}\right) - \mu\log v\right)^p \int_v^{n} \varphi'(t)dt\\
&= \sum_{(u,v) \in \Omega_{n-1}} \left(\log v \right)^{\frac{p}{2}}\left(C\left(\frac{u}{v}\right) - \mu\log v\right)^p \left((\log n)^{-\frac{p}{2}}-(\log v)^{-\frac{p}{2}}\right)\\
&=\frac{1}{(\log n)^{p/2}} \Pi(n-1) - \sum_{(u,v) \in \Omega_{n-1}}\left(C(u/v) - \mu\log v\right)^p\end{align*}
and in combination with \eqref{sweetie-man} and \eqref{nonce-nuggets} it is not difficult to see that this implies
\begin{equation}\label{if-man-say-im-a-ting-then-im-a-ting}\lim_{n \to \infty} \frac{1}{n^2(\log n)^{p/2}} \sum_{(u,v) \in \Omega_n} \left(C(u/v) - \mu \log v\right)^p =0.\end{equation}
To complete the proof of the theorem we must show that for all $p \geq 1$,
\begin{equation}\label{bell}\lim_{n \to \infty} \frac{1}{\#\Omega_n} \sum_{(u,v) \in \Omega_n} \left(\frac{C(u/v) - \mu \log n}{\sqrt{\log n}}\right)^p =\Bigg\{\begin{array}{cl}0& \text{if }p\text{ is odd}\\
\sigma^p (p-1)!!& \text{if }p\text{ is even,}\end{array}\end{equation}
which differs from \eqref{christmas-shitter} and \eqref{if-man-say-im-a-ting-then-im-a-ting} in that the term $\mu\log v$ is replaced with $\mu\log n$. To simplify the notation in the sequel let us write $a_{n,p}:=\sum_{(u,v) \in \Omega_n} (C(u/v)-\mu\log v)^p$ and $b_{n,p}:=\sum_{(u,v)\in \Omega_n} (C(u/v)-\mu\log n)^p$ for each $n,p \geq 1$.

In the case where $p$ is even, let us choose $K>0$ such that $(\mu\log x)^p \leq K^p\sqrt{x}$ for all $x \geq 1$. Using the reverse triangle inequality for the $p$-norm we obtain
\begin{equation}\label{pollo-con-pesto-for-the-lady}\left|\left(a_{n,p}\right)^{\frac{1}{p}}-\left(b_{n,p}\right)^{\frac{1}{p}}\right|\leq \left(\sum_{(u,v) \in \Omega_n} \mu^p \left|\log n - \log v\right|^p\right)^{\frac{1}{p}}\leq K\left(\sum_{(u,v) \in \Omega_n} \sqrt{\frac{n}{v}}\right)^{\frac{1}{p}}.\end{equation}
Applying Theorem \ref{you-go-tell-dr-dre-that-man-will-fuck-you-up} to the series $D_0(s+\frac{1}{4})=\sum_{n=1}^\infty n^{-2s}(\sum_{(u,v) \in \Omega_n\setminus \Omega_{n-1}} v^{-1/2})$ yields the asymptotic
\begin{equation}\label{spicy-sausage-penne-for-the-man}\lim_{n \to \infty} \frac{1}{\#\Omega_n}\sum_{(u,v) \in \Omega_n} \sqrt{\frac{n}{v}}=\left(\frac{\mathfrak{h}(T)}{\mathfrak{f}_0}\right)\left(\lim_{n \to \infty} \frac{1}{n\sqrt{n}}\sum_{(u,v) \in \Omega_n} \frac{1}{\sqrt{v}}\right)=\frac{2}{3},\end{equation}
and in view of \eqref{christmas-shitter} it follows that
\begin{equation}\label{new-man}\lim_{n \to \infty} \left(\frac{b_{n,p}}{\#\Omega_n(\log n)^{p/2}}\right)^\frac{1}{p} =\lim_{n \to \infty} \left(\frac{ a_{n,p}}{\#\Omega_n(\log n)^{p/2}}\right)^\frac{1}{p} =\sigma \left((p-1)!!\right)^{\frac{1}{p}}\end{equation}
which yields \eqref{bell} for even $p$.

We now consider odd $p$. The difference $|a_{n,p}-b_{n,p}|$ may be bounded by
\begin{equation}\label{shut-up-college-phd-kid}p\mu \sum_{(u,v) \in\Omega_n}\log\left(\frac{n}{v}\right)\left(\left(C\left(\frac{u}{v}\right)-\mu\log v\right)^{p-1}+\left(C\left(\frac{u}{v}\right)-\mu\log n\right)^{p-1}\right)\end{equation}
using the elementary inequality $\left|x^p-y^p\right| \leq p|x-y|(|x|^{p-1}+|y|^{p-1})$ for real $x,y$.
Let $\varepsilon_n:=(\log n)^{-2/3}$ for each $n> 1$ and consider separately the summation in \eqref{shut-up-college-phd-kid} over pairs $(u,v)$ such that $v \leq n^{1-\varepsilon_n}$ and over pairs $(u,v)$ such that $v > n^{1-\varepsilon_n}$. The first sum is simply a sum over $(u,v) \in \Omega_{\lfloor n^{1-\varepsilon_n}\rfloor}$ and so using the bound $\log (n/v) \leq \log n$ together with \eqref{new-man} we obtain
\[p\mu (\log n)\left(a_{\lfloor n^{1-\varepsilon_n}\rfloor,p-1} + b_{\lfloor n^{1-\varepsilon_n}\rfloor,p-1}\right)=O\left((\log n)^{\frac{p+1}{2}} n^{2-2\varepsilon_n}\right)=o(n^2).\]
In evaluating the second sum we may use the estimate $\log (n/v) <\varepsilon_n \log n =(\log n)^{1/3}$ to obtain the upper bound
\[p\mu(\log n)^{1/3} (a_{n,p-1}+b_{n,p-1})=O(n^2(\log n)^{\frac{p}{2} - \frac{1}{6}})=o(n^2 (\log n)^{p/2}).\]
We conclude that $|a_{n,p}-b_{n,p}|=o(n^2(\log n)^{p/2})$ which allows us to deduce \eqref{bell}, completing the proof for the distributions on $\Omega_n$.

In the case of the distribution on $\tilde{\Omega}_n$, pointwise multiplication of Dirichlet series yields
\begin{align*}\zeta(2s)D_p(s)&=\sum_{n=1}^\infty \frac{1}{n^{2s}}\sum_{d | n}\left(\sum_{(u,v) \in \Omega_d\setminus\Omega_{d-1}} \left(C(u/v)-\mu\log v\right)^p\right)\\
&=\sum_{n=1}^\infty \frac{1}{n^{2s}}\sum_{(u,v) \in \tilde{\Omega}_n\setminus\tilde{\Omega}_{n-1}} \left(C(u/v)-\mu\log \left(\frac{v}{\mathrm{gcd}(u,v)}\right)\right)^p\end{align*}
for $\Re(s)>1$, where $\zeta$ is the Riemann zeta function. To treat this case we consider modified Dirichlet series $\tilde{\mathcal{D}}_1$,$\tilde{\mathcal{D}}_2$,$\tilde{\mathcal{D}}_3$ which are derived from the corresponding series $\mathcal{D}_i$ by replacing $(C(u/v)-\mu\log v)$ with $(C(u/v)-\mu\log(v/\mathrm{gcd}(u,v)))$ and leaving the terms $(\log v)^p$, $(\log v)^{p/2}$ unchanged. These alternative Dirichlet series are analysed by substituting $\zeta(2s)D_p(s)$ for $D_p(s)$ throughout the preceding arguments in the obvious fashion until we obtain
\[\lim_{n \to \infty} \frac{1}{\#\tilde{\Omega}_n} \sum_{(u,v) \in \tilde{\Omega}_n} \left(\frac{C(u/v) - \mu \log \left(\frac{v}{\mathrm{gcd}(u,v)}\right)}{\sqrt{\log n}}\right)^p =\Bigg\{\begin{array}{cl}0& \text{if }p\text{ is odd}\\
\sigma^p (p-1)!!& \text{if }p\text{ is even.}\end{array}\]
To derive the analogue of \eqref{bell} for even $p$ we follow the model of \eqref{pollo-con-pesto-for-the-lady}, noting that the Dirichlet convolution
\[\zeta(2s)D_0\left(s+\frac{1}{4}\right)=\sum_{n=1}^\infty n^{-2s}\sum_{(u,v)\in\tilde{\Omega}_n\setminus \tilde{\Omega}_{n-1}} \sqrt{\frac{\mathrm{gcd}(u,v)}{v}}\]
yields the estimate
\[\sum_{(u,v) \in \tilde{\Omega}_n} \sqrt{\frac{n\cdot\mathrm{gcd}(u,v)}{v}}=O\left(n^2\right)\]
in lieu of \eqref{spicy-sausage-penne-for-the-man}, which suffices to complete the argument for even $p$. The argument for odd $p$ proceeds unchanged.

\section{Acknowledgments}
This research was supported by EPSRC grant EP/L026953/1.

\bibliographystyle{amsplain}
\bibliography{euclid}
\end{document}